\documentclass{amsart}  %
\usepackage{longtable}
\usepackage{tabu}
\usepackage{multirow}
\usepackage{booktabs}
\usepackage{mathtools}
\usepackage{amsmath}
\usepackage{graphicx}
\usepackage{color}
\usepackage{amsfonts}
\usepackage{amssymb}
\usepackage{mathrsfs}
\usepackage{amscd}
\usepackage{cite,url}
\usepackage{graphicx}
\usepackage{subcaption}

\usepackage{supertabular}
\usepackage{enumitem}
\setlist[enumerate]{leftmargin=.5in}
\setlist[itemize]{leftmargin=.5in}
\usepackage[toc,page]{appendix}


\newtheorem{theorem}{Theorem}[section]
\newtheorem{prop}[theorem]{Proposition}

\newtheorem{defi}[theorem]{Definition}
\newtheorem{alg}[theorem]{Algorithm}

\newtheorem{thm}[theorem]{Theorem}
\newtheorem{rmk}[theorem]{Remark}
\newtheorem{Example}[theorem]{Example}

\setcounter{equation}{0}
\setcounter{subsection}{0}
\setcounter{MaxMatrixCols}{30}
\numberwithin{equation}{section}



\setcounter{equation}{0}
\setcounter{subsection}{0}
\setcounter{MaxMatrixCols}{30}
\numberwithin{equation}{section}

\newcommand{\re}{\mathbb{R}}

\newcommand{\N}{\mathbb{N}}

\newcommand{\lmd}{\lambda}

\newcommand{\nn}{\nonumber}
\newcommand{\eps}{\epsilon}

\newcommand{\dt}{\delta}

\def\gm{\gamma}
\def\rank{\mbox{rank}}

\newcommand{\sig}{\sigma}

\newcommand{\ddd}{,\ldots,}

\newcommand{\reff}[1]{(\ref{#1})}

\newcommand{\mc}[1]{\mathcal{#1}}

\newcommand{\st}{\mathit{s.t.}}

\newcommand{\mt}[1]{\mathtt{#1}}
\newcommand{\mA}{\mathcal{A}}

\newcommand{\mB}{\mathcal{B}}

\newcommand{\trace}{ \mathrm{Trace} }

\newcommand{\bdes}{\begin{description}}
\newcommand{\edes}{\end{description}}

\newcommand{\bal}{\begin{align}}
\newcommand{\eal}{\end{align}}

\newcommand{\bnum}{\begin{enumerate}}
\newcommand{\enum}{\end{enumerate}}

\newcommand{\bit}{\begin{itemize}}
\newcommand{\eit}{\end{itemize}}

\newcommand{\bea}{\begin{eqnarray}}
\newcommand{\eea}{\end{eqnarray}}
\newcommand{\be}{\begin{equation}}
\newcommand{\ee}{\end{equation}}

\newcommand{\baray}{\begin{array}}
\newcommand{\earay}{\end{array}}

\newcommand{\bsry}{\begin{subarray}}
\newcommand{\esry}{\end{subarray}}

\newcommand{\bca}{\begin{cases}}
\newcommand{\eca}{\end{cases}}

\newcommand{\bcen}{\begin{center}}
\newcommand{\ecen}{\end{center}}

\newcommand{\bbm}{\begin{bmatrix}}
\newcommand{\ebm}{\end{bmatrix}}

\newcommand{\bmx}{\begin{matrix}}
\newcommand{\emx}{\end{matrix}}

\newcommand{\bpm}{\begin{pmatrix}}
\newcommand{\epm}{\end{pmatrix}}

\newcommand{\btab}{\begin{tabular}}
\newcommand{\etab}{\end{tabular}}


\begin{document}

\title[Robust Completion for Rank-$1$ Tensors with Noises]
{Robust Completion for Rank-$1$ Tensors with Noises}

\author[Jiawang~Nie]{Jiawang Nie}
\author[Xindong~Tang]{Xindong Tang}
\author[Jinling~Zhou]{Jinling Zhou}

\address{Jiawang~Nie,
Department of Mathematics, University of California San Diego,
9500 Gilman Drive, La Jolla, CA, USA, 92093.}
\email{njw@math.ucsd.edu}

\address{Xindong~Tang, Department of Mathematics,
Hong Kong Baptist University,
Kowloon Tong, Kowloon, Hong Kong.}
\email{xdtang@hkbu.edu.hk}

\address{Jinling~Zhou,
School of Mathematics and Computational Science,
Xiangtan University, Xiangtan, Hunan, 411105, China.}
\email{jinlingzhou@smail.xtu.edu.cn}

\begin{abstract}
This paper studies the rank-$1$ tensor completion problem for cubic tensors
when there are noises for observed tensor entries.
First, we propose a robust biquadratic optimization model
for obtaining rank-$1$ completing tensors.
When the observed tensor is sufficiently close to be rank-$1$,
we show that this biquadratic optimization
produces an accurate rank-$1$ tensor completion.
Second, we give an efficient convex relaxation
for solving the biquadratic optimization.
When the optimizer matrix is separable, we show
how to get optimizers for the biquadratic optimization
and how to compute the rank-$1$ completing tensor.
When that matrix is not separable, we apply its spectral decomposition to
obtain an approximate rank-$1$ completing tensor.
The software {\tt SDPNAL+} is applied to solve the resulting
large size semidefinite programs.
Numerical experiments are given to explore the efficiency of
this biquadratic optimization model and the proposed convex relaxation.
\end{abstract}

\keywords{tensor, rank-$1$ completion, biquadratic optimization, convex relaxation, separable matrix}

\subjclass[2020]{15A69, 90C23, 65F99}

\maketitle

\section{Introduction}
Let $\mathbb{R}$ be the real field.
A real tensor $\mathcal{A} \in \re^{n_1\times\cdots\times n_m}$
of order $m$ can be viewed as the multi-array indexed such that
\[
\mathcal{A}=(\mathcal{A}_{i_1\cdots i_m})_{
\substack{
 1 \le i_1 \le n_1, \ldots,
 1 \le i_m \le n_m  .
}  }
\]
When $m=3$, the $\mA$ is called a cubic tensor.
We focus on cubic tensors in this paper.
For vectors $a,b,c$, the notation $a\otimes b\otimes c$
denotes the rank-$1$ tensor such that
\[
( a\otimes b \otimes c  )_{ijk} \, = \, a_i b_j c_k
\]
for all indices $i,j,k$ in the range.
For the tensor $\mathcal{A}$, its {\it Candecomp-Parafac} (CP) rank
\cite{hitchcock1927}
is the smallest integer $r$ such that
\[
\mc{A} = \mA_1 + \cdots + \mA_r,
\]
where each $\mA_i$ is a rank-$1$ tensor.
The CP rank is sometimes just referenced as rank.
There also exist other notions of tensor ranks. We refer to
\cite{carroll1970,LMV2000,Harshman,hitchcock1927,hitchcock1928,Lim13} for related work.

For a partially given tensor, the {\it tensor completion problem} (TCP)
concerns how to fill its missing entries so that the whole tensor
has some attractive properties, e.g., its rank is low.
The whole tensor with all its missing entries filled
is called a tensor completion or completed tensor.
There are wide applications for tensor completions, such as
recommendation systems \cite{frolov2017,Kara-10},
imaging and signal processing \cite{LimCom10,L-H-14,zhao2020}.
More applications can be found in \cite{KolBad09}.

TCPs are natural generalizations of matrix completions \cite{cai2010,candes2012}.
Solving TCPs is generally more challenging.
Indeed, it is shown in \cite{C-D-17} that for rank-$1$ matrix completions,
if the associated bipartite graph is connected,
then the second order Moment-SOS relaxation is tight when the observed matrix is noise-free.
However, there does not exist similar results for rank-$1$ tensor completions,
to the best of the authors's knowledge.
There are different methods for doing tensor completions,
based on different tensor decompositions, such as
CP-decompositions \cite{Bai2016,qiu2021,zhao2020},
Tucker decomposition \cite{ashraphijuo2017,rauhut2017},
tensor singular value decomposition \cite{kilmer2011,ZhangAeron17},
tensor tubal rank \cite{JiangNg19,LAAW20}
and multi-rank \cite{kilmer2011}.
There exist computational methods based on
tensor nuclear norm relaxations
\cite{Nie-nuclear17,tang2015,YuanZhang2016}
and Riemannian-manifold optimization
\cite{Breiding,Dong-Gao-21,gao2024riemannian,Kressner2014,Stein-16,Swijsen22}.
Most tensor computation problems are NP-hard \cite{hillar2013}.

In the earlier work, most tensor completion methods assume the observed tensor
entries are noise-free, i.e., the exact values of those entries are known.
However, in applications, there almost always exist noises.
It is rather desirable to predict tensor data from both noisy and incomplete observations.
In contrast to the abundant literature of noise-free TCPs,
there exist relatively less work for noisy TCPs.
For noisy rank-$1$ matrix completion, stable completions can be computed
by the second order Moment-SOS relaxations \cite{C-D-17}.
Tensor decomposition and total variation regularization based models
have been proposed to solve noisy TCPs \cite{Acar-11,Zhao-Zhang}.
Tensor tubal rank based methods for solving third-order noisy TCPs
are introduced in \cite{wang-jin-17,Wang-19}.
Besides that, a semidefinite relaxation method based on the sum-of-squares hierarchy
is given in \cite{BarMoi22}.

Most tensor optimization problems can be formulated
as polynomial optimization \cite{NiePoly}.
Nuclear norm relaxations can be used to
get tensor completions \cite{Nie-nuclear17,tang2015,zhou}.
Low rank symmetric tensor approximations can be obtained
by solving equations about generating polynomials \cite{NieR}.

\medskip

\noindent{\bf Rank-$1$ tensor completion.}
A particularly interesting class of TCPs is the rank-$1$ tensor completion, i.e.,
finding a rank-$1$ tensor that matches all partially given entries.
The rank-$1$ TCP has very special properties.
It is shown in \cite{zhou} that this problem is equivalent to
a special rank-$1$ matrix recovery problem.
The geometric properties of the rank-$1$ tensor completion problem
are well studied by Kahle et al. \cite{KKKR17}.

For a positive integer $n$, denote the index set $[n] \coloneqq \{1\ddd n\}$.
A cubic tensor $\mc{A}\in \re^{n_1\times n_2\times n_3}$ is said to be {\it partially given}
if there exists a set $\Omega \subseteq [n_1]\times [n_2]\times [n_3]$
such that the entry $\mA_{ijk}$ is given for each triple $(i,j,k) \in \Omega$,
while it is not given for $(i,j,k) \not \in \Omega$.
As shown in \cite{zhou},  $\mA = a \otimes b \otimes c$
is a rank-$1$ tensor completion if and only if the vectors $a,b$ satisfy
\be \label{eq-quadratic-sec1}
\mathcal{A}_{i_sj_sk}a_{i_t}b_{j_t} -
\mathcal{A}_{i_tj_tk}a_{i_s}b_{j_s} \, = \,  0
\ee
for all $(i_s,\,j_s,\,k), (i_t, j_t, k) \in \Omega$.
Based on this observation, a rank-$1$ tensor completion can be obtained by
solving the quadratic system of equations like \reff{eq-quadratic-sec1}.
Nuclear norm and moment relaxation methods are given in \cite{zhou}
to solve \reff{eq-quadratic-sec1}.
Under some general conditions, the moment method can find a rank-$1$ tensor completion
if there exists one, or it detects nonexistence of rank-$1$ completions if it does not exist.

\medskip

\noindent{\bf Contributions.}
In the work \cite{zhou}, the partially given tensor entries are supposed to have no noises.
However, observation of tensor entries often has noises.
In this paper, we consider the rank-$1$ TCP such that the observed tensor entries
$\mA_{ijk}$ for $(i,j,k) \in \Omega$ have noises.
That is, there is a rank-$1$ tensor $\widehat{\mA}\coloneqq
\hat{a}\otimes \hat{b}\otimes \hat{c}$ such that
\be \label{eq:mA=hatmA+dt} \mA = \widehat{\mA} + \dt\mA,\ee
where $\dt\mA$ is the noise tensor.
Our purpose is to recover the rank-$1$ tensor $\hat{a}\otimes \hat{b}\otimes \hat{c}$.
Due to the existence of the noise $\dt \mA$, which is unknown,
the tensor $\hat{a}\otimes \hat{b}\otimes \hat{c}$
cannot be uniquely determined. This is because for any small perturbation
$(\dt \hat{a}, \dt \hat{b}, \dt \hat{c})$, it holds
\[
 \mA =  ( \hat{a} + \dt \hat{a} ) \otimes (\hat{b} + \dt \hat{b} )
 \otimes (\hat{c} + \dt \hat{c} ) +   \dt \widehat{\mA} ,
\]
with $\dt \widehat{\mA}$ also small. For this reason,
it is more practical for us to look for a rank-$1$ tensor
$a^* \otimes b^* \otimes c^*$ such that
\be  \label{eq:tcp}
(\mA - a^*\otimes b^*\otimes c^*)_{ijk} \quad
\mbox{is small for} \quad (i,j,k)\in \Omega.
\ee

Due to the noise, the vectors $a^*, b^*, c^*$
for \reff{eq:tcp} typically do not satisfy equations in \reff{eq-quadratic-sec1}.
This is because $\mA$ itself typically does not have an exact rank-$1$ completion.
Thus, the methods in \cite{zhou} are not applicable
for finding a rank-$1$ tensor completion required for \reff{eq:tcp}.
Alternatively, one may consider to solve the nonlinear least squares (NLS) problem
\be  \label{eq-least-sec1}
\min_{a,b,c} \quad \sum_{(i,j,k)\in\Omega}(\mathcal{A}_{ijk}-a_ib_jc_k)^2.
\ee
This is a nonconvex polynomial optimization problem of degree $6$.
Traditional nonlinear optimization methods are applicable to solve \reff{eq-least-sec1}.
However, their performance is very heuristic,
since they usually can only guarantee to find a stationary point.
In view of polynomial optimization, the objective in \reff{eq-least-sec1}
has degree $6$, with three groups of variables $a,b,c$.
There are totally $n_1+n_2+n_3$ variables in the polynomial.
So, the classical Moment-SOS relaxations \cite{NiePoly}
are quite expensive for solving \reff{eq-least-sec1} directly.

To get a rank-$1$ tensor completion for \reff{eq:tcp},
we propose to solve the following biquadratic optimization problem
\begin{equation}  \label{eq-tcn-1}
\left\{ \baray{cl}
\displaystyle\min  &\sum\limits_{k=1}^{n_3}\sum\limits_{\substack{(i_s, j_s,k)\in\Omega \\ (i_t, j_t,k)\in\Omega}} (\mA_{i_s j_s k} a_{i_t }b_{j_t} -\mA_{i_t j_t k} a_{i_s}b_{ j_s})^2  \\
\st  & ||a||=||b||=1  .
\earay \right.
\end{equation}
The above is motivated as follows. If $a\otimes b\otimes c$
is an approximated rank-$1$ tensor completion for
$\mA = \widehat{\mA} + \dt\mA$, then for each $k$ and for each
$(i_s, j_s,k)\in\Omega$ and $(i_t, j_t,k)\in\Omega$, it holds that
 \[ \mA_{i_s j_s k} a_{i_t }b_{j_t} -\mA_{i_t j_t k} a_{i_s}b_{ j_s} \approx 0. \]
Please see Section~\ref{sc:completion} for more details.
We remark that the problem~(\ref{eq-tcn-1})
only involves the vector variables $a,b$ but not $c$.
Instead of applying the classical Moment-SOS relaxations,
we propose a specifically designed efficient convex relaxation,
which is given in (\ref{moment-for-ab}), to solve (\ref{eq-tcn-1}).
After it is solved, we solve linear least square problems to
retrieve the vector $c$, which eventually produces a rank-$1$ tensor completion for \reff{eq:tcp}.

The main contributions of this paper are:
\begin{itemize}

\item When $\mA$ is sufficiently close to $\hat{a}\otimes \hat{b}\otimes \hat{c}$,
we show that the rank-$1$ tensor $a^* \otimes b^* \otimes c^*$ obtained by solving \reff{eq-tcn-1}
is also close to $\hat{a}\otimes \hat{b}\otimes \hat{c}$.

\item The biquadratic optimization problem (\ref{eq-tcn-1})
is more computationally attractive than (\ref{eq-least-sec1}),
since it only involves variables $(a,b)$ and the degree is four.

\item Due to the biquadratic structure, we propose an efficient convex relaxation,
which is \reff{moment-for-ab}, to solve \reff{eq-tcn-1}.
The relaxation \reff{moment-for-ab} is more computationally appealing
than the general Moment-SOS relaxation for solving \reff{eq-tcn-1}.

\item  We show that the convex relaxation (\ref{moment-for-ab}) is tight
if the matrix $G[y^*]$ in \reff{eq:Gy} is separable,
for the optimizer $y^*$ of \reff{moment-for-ab}.

\item  The software {\tt SDPNAL+} is applied for solving the convex relaxation (\ref{moment-for-ab}),
which is typically a large size semidefinite program (SDP).
Numerical experiments are presented to show the computational efficiency.

\end{itemize}

This paper is organized as follows.
In Section~\ref{sc:preliminaries}, some notation and preliminaries are introduced.
Section~\ref{sc:completion} gives the biquadratic optimization model
and its robustness is shown.
In Section~\ref{sc:minimizer}, we propose a convex relaxation
and show how to get the rank-$1$ completing tensor.
Numerical experiments are presented in Section~\ref{sc:ne}.
We make a conclusion in Section~\ref{sc:con}.

\section{Preliminaries}
\label{sc:preliminaries}

\medskip

\noindent{\bf Notation.}
The symbol $\N$ (resp., $\re$)
denotes the set of nonnegative integers (resp., real numbers).
For an integer $n>0$, $[n]$ denotes the set $\left\{1,\cdots , n\right\}$.
We denote by $e$ the vector of all ones and by $e_i$
the vector with the $i$th entry equaling $1$ and all other equaling zero,
while their dimensions depend on operations with them.
The superscript $^T$ stands for the transpose of a matrix or vector.
For the vector $a\in \re^{n}$, $||a||=\sqrt{a^Ta}$ denotes the Euclidean norm.
For a symmetric matrix $M$, $M\succeq 0$ (resp., $M\succ 0$) means that $M$ is positive semidefinite (resp., positive definite). The cone of all $N$-by-$N$
real symmetric positive semidefinite matrices is denoted as $\mc{S}_+^N$.
For two matrices $A = (a_{ij})$ and $B$, their Kronecker product is the block matrix
\[
A \otimes B =  (a_{ij} B) .
\]
In particular, for two vectors $a =\bbm a_1 & \cdots & a_{n_1} \ebm^T$ and $b$,
\[
a\otimes b=\bbm a_1b^T& a_2b^T & \ldots & a_{n_1}b^T \ebm^T .
\]
For two cubic tensors $\mA,\mB \in \re^{n_1\times n_2\times n_3}$,
their Hilbert-Schmidt product is
\[
\langle\mA,\mB\rangle\,:=\sum_{(i,j,k)\in[n_1] \times [n_2] \times [n_3]}\mA_{ijk}{\mB}_{ijk}.
\]
This induces the Hilbert-Schmidt norm
\[
\Vert \mA\Vert = \sqrt{ \langle\mA,\mA\rangle  }.
\]
For an index set $\Omega\subseteq [n_1] \times [n_2] \times [n_3]$,
$\mA_{\Omega}$ stands for the subtensor of $\mA$
whose entries are indexed by $(i,j,k) \in \Omega$.
Its norm is defined as
\[
\Vert \mA\Vert_{\Omega}  \coloneqq
\sqrt{ \sum_{ (i,j,k) \in \Omega } | \mA_{ijk} |^2 }.
\]

\subsection{Separable matrices}
\label{sc:pre_sep}

We give brief introduction to separable matrices,
which are quite useful for solving the biquadratic optimization \reff{eq-tcn-1}.
Denote the label set
\be \label{df:label:ijkl}
\Gamma:= \{ (i,j,k,l): \,  1\leq i \leq j \leq n_1,\, 1 \leq k \leq l \leq n_2 \}.
\ee
Its cardinality is
$
|\Gamma| = \frac{1}{4} n_1 (n_1+1) n_2 (n_2+1) .
$
Let $\re^{\Gamma}$ be the space of vectors whose entries are labelled by
$(i,j,k,l) \in \Gamma$.
For $i,j\in[n_1]$ and $k,l\in[n_2]$ with $i\ne j$ and $k\ne l$,
denote the basis matrices
\be \label{basis:EijEkl}
\baray{ll}
E_{ii} = e_i e_i^T \in\mc{S}^{n_1}, &
E_{ij} = e_i e_j^T + e_i e_j^T \in \mc{S}^{n_1}, \\
E_{kk} = e_k e_k^T \in\mc{S}^{n_2},   &
E_{kl} = e_k e_l^T + e_l e_k^T \in \mc{S}^{n_2}.
\earay
\ee
Then, for all $a = (a_1, \ldots, a_{n_1})$ and $b = (b_1, \ldots, b_{n_2})$, it holds that
\begin{align}
   (aa^T)\otimes (bb^T) & = \Big( \sum_{i\in [n_1] }\sum_{j \in[n_1]} a_ia_j e_i e_j^T\Big)
    \otimes \Big( \sum_{k\in [n_2] }\sum_{l \in[n_2]} b_kb_l e_k e_l^T \Big) \nn  \\
   & = \sum_{i,j\in[n_1]}\sum_{k,l\in[n_2]} a_ia_jb_kb_l (e_i e_j^T)\otimes (e_k e_l^T) \nn  \\
   & = \sum_{(i,j,k,l)\in\Gamma } a_i a_j b_k b_l E_{ij} \otimes E_{kl}.  \label{eq:ababT}
\end{align}
This motivates us to define the linear matrix function in $y \in \re^{\Gamma}$:
\be  \label{eq:Gy}
G[y]  \coloneqq  \sum_{(i,j,k,l)\in\Gamma }y_{ijkl}  E_{ij} \otimes E_{kl},
\ee
where $y = (y_{ijkl})_{ (i,j,k,l)\in\Gamma }$.
For instance, when $n_1=n_2=3$, $G[y]$ reads as
\[
G[y] = \left[
\begin{array}{ccccccccc}
y_{1111} & y_{1112} & y_{1113} & y_{1211} & y_{1212} & y_{1213} & y_{1311} & y_{1312} & y_{1313}\\
y_{1112} & y_{1122} & y_{1123} & y_{1212} & y_{1222} & y_{1223} & y_{1312} & y_{1322} & y_{1323}\\
y_{1113} & y_{1123} & y_{1133} & y_{1213} & y_{1223} & y_{1233} & y_{1313} & y_{1323} & y_{1333}\\
y_{1211} & y_{1212} & y_{1213} & y_{2211} & y_{2212} & y_{2213} & y_{2311} & y_{2312} & y_{2313}\\
y_{1212} & y_{1222} & y_{1223} & y_{2212} & y_{2222} & y_{2223} & y_{2312} & y_{2322} & y_{2323}\\
y_{1213} & y_{1223} & y_{1233} & y_{2213} & y_{2223} & y_{2233} & y_{2313} & y_{2323} & y_{2333}\\
y_{1311} & y_{1312} & y_{1313} & y_{2311} & y_{2312} & y_{2313} & y_{3311} & y_{3312} & y_{3313}\\
y_{1312} & y_{1322} & y_{1323} & y_{2312} & y_{2322} & y_{2323} & y_{3312} & y_{3322} & y_{3323}\\
y_{1313} & y_{1323} & y_{1333} & y_{2313} & y_{2323} & y_{2333} & y_{3313} & y_{3323} & y_{3333}\\
\end{array} \right].
\]

The matrix $G[y]$ is closely related to biquadratic optimization.
If there is a Borel measure $\mu$ whose support is contained in a set
$K \subseteq \re^{n_1}\times \re^{n_2}$ such that
\be\label{eq:yijlk}
y_{ijkl} = \int_K a_ia_jb_kb_l {\tt d}\mu
\quad \text{for all} \quad  (i,j,k,l)\in \Gamma,
\ee
then \reff{eq:ababT} implies
\be  \label{eq:Gy_moment}
\begin{aligned}
G[y] = \sum_{(i,j,k,l)\in\Gamma } \int_K a_i a_j b_k b_l\, {\tt d}\mu E_{ij} \otimes E_{kl}
  = \int_K  (aa^T)\otimes (bb^T) {\tt d}\mu.
\end{aligned}
\ee

The matrix $G[y]$ is said to be {\it separable} if there exist vectors
$a^{(i)}\in\re^{n_1}$, $b^{(i)}\in\re^{n_2}$ and
scalars $\lmd_i \in \re^1$ such that
\be \label{decomp:G[y]}
\begin{gathered}
G[y] = \sum\limits_{i=1}^r \lmd_i
( a^{(i)} {a^{(i)} }^T )  \otimes  ( b^{(i)} {b^{(i)} }^T ) , \\
\Vert a^{(i)}\Vert = \Vert b^{(i)}\Vert = 1,  \,  \lmd_i > 0, \, i=1, \ldots, r .
\end{gathered}
\ee
It is interesting to note that $G[y]$ is separable if and only if \reff{eq:Gy_moment}
holds for some measure $\mu$ whose support is contained in
$K = \{ (a,b) : \Vert a\Vert = \Vert b\Vert = 1 \}$.
We refer to \cite{NieZhang16} for this fact.
Clearly, if $G[y]$ is separable, then it must be positive semidefinite.

The case that $\rank\, G[y] = 1$ is particularly interesting.

\begin{prop} \label{prop:rank1}
Let $y\in \re^{\Gamma}$ be such that $G[y] \succeq 0$.
If $\rank \, G[y] = 1$ and $\trace\, G[y] = 1$,
then there exist $a\in\re^{n_1}$ and $b\in\re^{n_2}$ such that
\be \label{G[y]=aaTbbT}
 G[y] = (aa^T)\otimes (bb^T),\quad \Vert a\Vert = \Vert b\Vert = 1.
\ee
\end{prop}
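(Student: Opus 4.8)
The plan is to exploit that a rank-one positive semidefinite matrix is automatically a symmetric outer product, and then to show that the special symmetry pattern built into $G[y]$ forces that outer product to respect the Kronecker structure. First I would use $G[y]\succeq 0$ and $\rank\, G[y]=1$ to write $G[y]=vv^T$ for some nonzero vector $v\in\re^{n_1 n_2}$; the trace condition $\trace\, G[y]=\|v\|^2=1$ then normalizes it to $\|v\|=1$. It now suffices to prove that $v$ is a Kronecker product $v=a\otimes b$, since $(a\otimes b)(a\otimes b)^T=(aa^T)\otimes(bb^T)$ by the mixed-product rule, which is exactly the claimed form of $G[y]$.

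The key observation, to be recorded first, concerns the entry pattern of $G[y]$. Indexing the rows and columns of $G[y]$ by pairs $(p,q)$ with $p\in[n_1]$, $q\in[n_2]$ in the lexicographic order used in \reff{eq:Gy}, I would check from the definition of the basis matrices that the $((p,q),(r,s))$ entry of $G[y]$ equals $y_{ijkl}$, where $(i,j)$ and $(k,l)$ are the sorted versions of $(p,r)$ and $(q,s)$. Consequently the entry depends only on the unordered pairs $\{p,r\}$ and $\{q,s\}$, which yields the symmetry
\[
G[y]_{(p,q),(r,s)} \;=\; G[y]_{(r,q),(p,s)} \qquad \text{for all } p,r\in[n_1],\ q,s\in[n_2].
\]
Writing $v$ as an $n_1\times n_2$ matrix $V=(v_{pq})$ and substituting $G[y]_{(p,q),(r,s)}=v_{pq}v_{rs}$, this symmetry becomes $v_{pq}v_{rs}=v_{rq}v_{ps}$, i.e. every $2\times 2$ minor of $V$ vanishes.

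From the vanishing of all $2\times 2$ minors I conclude $\rank V\le 1$; since $G[y]\neq 0$ forces $v\neq 0$, and hence $V\neq 0$, we in fact get $\rank V=1$, so $V=ab^T$ for some $a\in\re^{n_1}$ and $b\in\re^{n_2}$. This is precisely $v_{pq}=a_pb_q$, i.e. $v=a\otimes b$, and therefore $G[y]=(aa^T)\otimes(bb^T)$. Finally I would rescale, using $1=\|v\|^2=\|a\|^2\|b\|^2$: replacing $a$ by $a/\|a\|$ and $b$ by $\|a\|\,b$ leaves $(aa^T)\otimes(bb^T)$ unchanged while enforcing $\|a\|=\|b\|=1$. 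The only delicate point is the bookkeeping in the second paragraph, namely correctly matching the $\Gamma$-labels $(i,j,k,l)$ to the row/column index pairs of $G[y]$ and verifying that the symmetric basis matrices $E_{ij}$, $E_{kl}$ (including the diagonal cases $i=j$ or $k=l$) produce exactly the claimed entry pattern; once that identification is pinned down, the remaining steps are immediate.
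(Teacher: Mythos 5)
Your proof is correct, but it follows a genuinely different route from the paper's. The paper works blockwise: it writes $G[y]$ as an $n_1\times n_1$ array of symmetric $n_2\times n_2$ blocks $B_{ij}=\mu_{ij}w_{ij}w_{ij}^T$, uses positive semidefiniteness and the rank-one condition on $2\times 2$ block submatrices to force all the $w_{ij}$ to coincide with a single unit vector $w_{11}$, and then factors $G[y]=U\otimes(w_{11}w_{11}^T)$ with $U=(\mu_{ij})$ a rank-one PSD matrix. You instead factor $G[y]=vv^T$ immediately and push the structure onto the vector $v$: the observation that $G[y]_{(p,q),(r,s)}$ depends only on the unordered pairs $\{p,r\}$ and $\{q,s\}$ (a partial-transpose symmetry built into the definition \reff{eq:Gy}) translates into the vanishing of all $2\times 2$ minors of the matricization $V$ of $v$, hence $V=ab^T$ and $v=a\otimes b$. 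Both arguments are sound; yours is shorter and isolates the single structural fact about $G[y]$ that is actually needed, while the paper's blockwise argument has the side benefit of directly producing the explicit entrywise formulas for $a^*$ and $b^*$ that are later used in \reff{eq:astarbstar}. The one point you correctly flag as delicate, the identification of $G[y]_{(p,q),(r,s)}$ with $y_{\mathrm{sort}(p,r)\,\mathrm{sort}(q,s)}$ including the diagonal cases, does check out against the displayed $n_1=n_2=3$ example, so there is no gap.
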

\begin{proof}
We write $G[y]$ in the block form
\[
G\left[y\right] \, = \, \begin{bmatrix}
 B_{11} & B_{12} & \cdots  &  B_{1n_1}\\
 B_{12} & B_{22} & \cdots  &  B_{2n_1}\\
 \vdots & \vdots & \ddots  & \vdots\\
 B_{1n_1} & B_{2n_1} & \cdots  &  B_{n_1n_1}\\
\end{bmatrix},
\]
where each block $B_{ij}\in \re^{n_2\times n_2}$ is also a symmetric matrix.
Since $\rank \,G\left[y \right]=1$, we know $\rank \,B_{ij} \leq 1$ for all $i,j$.
Every diagonal block $B_{ii} \succeq 0$, since $G[y] \succeq 0$.
By (\ref{eq:Gy}), it holds that for all $i,j \in [n_1]$
\[ B_{ij}=\sum_{1\le k\le l\le n_2} y_{ijkl} E_{kl}.\]
Note that each $B_{ij}$ is also a symmetric matrix.
Therefore, for all $1\le i\le j\le n_1$, there exist $\mu_{ij}\in\mathbb{R}$
and $w_{ij}\in \mathbb{R}^{n_1}$ such that
\be \label{decomp:Bij}
B_{ij}=\mu_{ij}w_{ij}w_{ij}^T,\quad \Vert w_{ij}\Vert = 1,
\quad e^Tw_{ij} \ge0 , \quad \mu_{ii}\geq 0.
\ee
Since $G[y]\succeq 0$ and $\mbox{Trace}\, G[y] = 1$,
we can generally assume that $B_{11}$ is a nonzero matrix
(if $B_{11}=0$, then $B_{1i}=0$ for all $i$,
in which case we can consider the first nonzero block $B_{ii} \ne 0$).
Then $\mu_{11} > 0$ and $w_{11}$ is a nonzero vector. Note that
\[
\begin{bmatrix}
 B_{11} & B_{1i}    \\
 B_{1i} & B_{ii}
\end{bmatrix}
\succeq 0, \quad
\rank \begin{bmatrix}
 B_{11} & B_{1i}    \\
 B_{1i} & B_{ii}
\end{bmatrix}  = 1.
\]
This forces that each $w_{1i}$ is a multiple of $w_{11}$ or $\mu_{1i}=0$.
Since $\Vert w_{1i}\Vert = 1$ and $e^Tw_{1i}\ge 0$,
we have either $w_{1i} = w_{11}$ or $\mu_{1i} = 0$ for each $i$.
Since $\rank \, G[y] = 1$, each block row must be a multiple of the first row block.
Therefore, we can further get that
either $w_{ij} = w_{11}$ or $\mu_{ij} = 0$, for all $i,j$.
For the case $\mu_{ij} = 0$, we can also set $w_{ij} = w_{11}$
and \reff{decomp:Bij} still holds.

Let $U = (\mu_{ij})_{i,j \in [n_1]}$, then
\[
G[y] \, = \, U \otimes  (w_{11} w_{11}^T ).
\]
Since $G[y] \succeq 0$ and $\rank \, G[y] = 1$,
we also have $U \succeq 0$ and $\rank \, U = 1$.
There exists a vector $a \in \re^{n_1}$ such that $U = aa^T$.
Let $b = w_{11}$.  Since $\trace\, G[y] = 1$, we have
$\| a \| = \| b \| = 1$.
This completes the proof.	
\end{proof}

When $\rank\, \, G[y] > 1$, the separability of $G[y]$
can be detected by Algorithm~4.2 in \cite{NieZhang16}.
It requires to solve a hierarchy of moment relaxations.
If $G[y]$ is separable, the algorithm can get a decomposition like \reff{decomp:G[y]}.
If it is not separable, the algorithm can return a certificate for that,
i.e., it shows a moment relaxation is infeasible
and denies the existence of \reff{decomp:G[y]}.
We refer to \cite{NieZhang16} for more details about separable matrices.

\section{Biquadratic optimization for completion }
\label{sc:completion}

Let $\mc{A} \in \re^{n_1 \times n_2  \times n_3}$
be a partially given tensor such that its entries
$\mA_{ijk}$ are known for $(i,j,k) \in \Omega$,
for a given index set $\Omega\subseteq [n_1]\times [n_2] \times [n_3]$.
We wish to look for a rank-$1$ tensor $a \otimes b \otimes c$
with $a  \in \re^{n_1}$, $b \in \re^{n_2}$, $c \in \re^{n_3}$, such that
$\mA_{ijk} = a_i b_j c_k$ for all $(i,j,k) \in \Omega$.
However, such vectors $a,b,c$ usually do not exist
when there are noises in the observed tensor entries $\mA_{ijk}$.
Typically, $\mA$ is close to a rank-$1$ tensor, but they are not exactly equal.

Suppose there is a rank-$1$ tensor
$\widehat{\mA}=\hat{a}\otimes \hat{b}\otimes \hat{c}$,
for nonzero vectors $\hat{a}\in\re^{n_1}$,
$\hat{b}\in\re^{n_2}$, $\hat{c}\in\re^{n_3}$ such that
\[
\mA_{ijk}=\widehat{\mA}_{ijk}+\delta\mA_{ijk} \quad
\text{for all} \quad  (i,\,j,\,k)\in\Omega,
\]
where $\delta\mA_{ijk}$ stands for the noise at the $(i,j,k)$th tensor entry.
Typically, the noises $\delta\mA_{ijk}$ are small.
Throughout the context, we say the noise $\delta\mA$
is small if its norm $\Vert \delta\mA\Vert_{\Omega}$ is small.
Up to scaling, we can generally assume that
\be  \label{eq:norm=1}
\Vert \hat{a}\Vert = \Vert\hat{b}\Vert = 1.
\ee
This is because $\hat{a}\otimes \hat{b}\otimes \hat{c} =
(\alpha\hat{a})\otimes (\beta\hat{b})\otimes (\frac{1}{\alpha\beta}\hat{c})$
for all nonzero scalars $\alpha,\beta$.
Our goal is to recover the vectors $\hat{a}$, $\hat{b}$, $\hat{c}$ from
the given tensor entries $\mA_{ijk}$.

For each $k=1\ddd n_3$, denote the index set of pairs
\begin{equation*}
\Omega_k \, \coloneqq \, \left\{\  (i,j)\in [n_1]\times [n_2] : (i,j,k)\in \Omega\ \right\}.
\end{equation*}
Let $m_k\coloneqq |\Omega_k|$, the cardinality of $\Omega_k$.
One can write that
\be  \label{Omega_k}
\Omega_k  =  \{ (i_1,j_1),\,(i_2,j_2),\ldots, (i_{m_k},j_{m_k}) \}.
\ee
When all $\delta\mA_{ijk} = 0$,
for each $k = 1 \ddd n_3$ we have
\be  \label{eq-solve-ck}
\begin{bmatrix}
\mathcal{A}_{i_1j_1k}\\
\mathcal{A}_{i_2j_2k}\\
\vdots \\
 \mathcal{A}_{i_{m_k}j_{m_k}k}
\end{bmatrix}
=
\begin{bmatrix}
\hat{a}_{i_1}\hat{b}_{j_1}\\
\hat{a}_{i_2}\hat{b}_{j_2}\\
\vdots \\
\hat{a}_{i_{m_k}}\hat{b}_{j_{m_k}}
\end{bmatrix}\hat{c}_k .
\ee
This implies that
\[
\mA_{i_sj_sk} \hat{a}_{i_t} \hat{b}_{j_t} -
\mA_{i_tj_tk} \hat{a}_{i_s} \hat{b}_{j_s} \, =  \, 0
\]
for all $(i_s,\,j_s), (i_t,\,j_t) \in \Omega_k$.
However, when there are noises, the above equations do not hold anymore.

When there are noises $\delta\mA_{ijk}$, the equation~\eqref{eq-solve-ck} becomes
\begin{equation}\label{eq:noise:mataix}
\begin{bmatrix}
\mathcal{A}_{i_1j_1k} \\
\mathcal{A}_{i_2j_2k} \\
\vdots  \\
\mathcal{A}_{i_{m_k}j_{m_k}k}
\end{bmatrix}
=
\begin{bmatrix}
\hat{a}_{i_1}\hat{b}_{j_1}\\
\hat{a}_{i_2}\hat{b}_{j_2}\\
\vdots \\
\hat{a}_{i_{m_k}}\hat{b}_{j_{m_k}}
\end{bmatrix}\hat{c}_k
+\bbm
\delta\mA_{i_1j_1k}\\
\delta\mA_{i_2j_2k}\\
\vdots\\
\delta\mA_{i_{m_k}j_{m_k}k}
\ebm.
\end{equation}
When the noises $\delta\mA_{ijk}$ are small, one can see that
\[
\begin{array}{rl}
& \mA_{i_sj_sk}\hat{a}_{i_t}\hat{b}_{j_t} -
\mA_{i_tj_tk}\hat{a}_{i_s}\hat{b}_{j_s} \\
= & \hat{c}_k(\hat{a}_{i_s}\hat{b}_{j_s}\hat{a}_{i_t}\hat{b}_{j_t}-
\hat{a}_{i_t}\hat{b}_{j_t}\hat{a}_{i_s}\hat{b}_{j_s})
+(\delta\mA_{i_sj_sk} \hat{a}_{i_t}\hat{b}_{j_t}-
\delta\mA_{i_tj_tk} \hat{a}_{i_s}\hat{b}_{j_s}) \\
= & \delta\mA_{i_sj_sk} \hat{a}_{i_t}\hat{b}_{j_t} -
\delta\mA_{i_tj_tk} \hat{a}_{i_s}\hat{b}_{j_s} \\
\approx & 0 .
\end{array}
\]
This motivates us to find a rank-$1$ tensor completion $a \otimes b \otimes c$ such that
\[
\mA_{i_sj_sk} a_{i_t} b_{j_t} -
\mA_{i_tj_tk} a_{i_s} b_{j_s} \approx 0.
\]
So, we consider the set of quadratic homogeneous polynomials in $a,b$:
\be \label{polyset:Phi}
\Phi  \coloneqq \left\{ \mA_{i_sj_sk}a_{i_t}b_{j_t} -
\mA_{i_tj_tk}a_{i_s}b_{j_s}
\left| \baray{c}
(i_s,\,j_s), (i_t,\,j_t) \in \Omega_k, \\
s <  t, \, k = 1, \ldots,  n_3
\earay \right.
\right\} .
\ee
There are totally $\sum_{k=1}^{n_3} \binom{m_k}{2}$ polynomials in $\Phi$.
Each $\phi\in\Phi$ is bilinear in $(a,b)$.
When $\mA$ is noise-free, i.e., $\delta\mA_{ijk} = 0$ for all $(i,j,k)\in\Omega$, we have
$\phi(\hat{a},\hat{b}) = 0$ for every $\phi\in\Phi$.
When the noises are small, we have
$\phi(\hat{a},\hat{b})  \approx  0$.

To get a rank-$1$ completing tensor $\widehat{\mA}=\hat{a}\otimes \hat{b}\otimes \hat{c}$
as in \eqref{eq:norm=1} and \eqref{eq:noise:mataix}, we consider the
polynomial optimization problem
\be    \label{pri-find-ab}
\left\{ \begin{array}{cl}
 \min & \displaystyle f(a,b)  \coloneqq
\sum\limits_{k=1}^{n_3}\sum\limits_{1 \le s < t \le m_k}
(\mA_{i_s j_s k} a_{i_t }b_{j_t} -\mA_{i_t j_t k} a_{i_s}b_{ j_s})^2  \\
\st & \Vert a\Vert^2 = \Vert b\Vert^2 = 1.
\end{array}
\right.
\ee
The objective polynomial $f$ is biquadratic in $(a,b)$, so it holds
\be\label{eq:fab=f-a-b}
f(a,b) = f(-a,b) = f(a,-b) = f(-a,-b).
\ee
This means that $(a^*,b^*)$ is a minimizer for \reff{pri-find-ab}
if and only if all four pairs $(\pm a^*, \pm b^*)$ are minimizers.
Since \reff{pri-find-ab} is a polynomial optimization problem,
one can apply the classical Moment-SOS hierarchy \cite{HKL20,NiePoly} to solve it.
However, this is not computationally appealing for solving \reff{pri-find-ab}.
In Section~\ref{sc:minimizer}, we propose an efficient convex relaxation,
which is based on the biquadratic structure.

\subsection{Stability analysis}
\label{sec:error}

Suppose $(a^*,b^*)$ is the minimizer of \eqref{pri-find-ab}.
We show that $a^*\otimes b^*$ is close to $\hat{a} \otimes \hat{b}$
or $-\hat{a} \otimes \hat{b}$
when the noise $\dt \mA$ is sufficiently small.

Recall that $\widehat{\mA} = \hat{a}\otimes \hat{b}\otimes \hat{c}$
with the normalization $\Vert \hat{a}\Vert = \Vert \hat{b}\Vert = 1$.
For $k=1,\ldots, n_3$ and for $(i_t,j_t),\,(i_s,j_s)\in \Omega_k$,
let $H_{s,t, k}$ denote the Hessian of
\be   \label{hat:phi}
\hat{\psi}_{s, t, k} (a, b) \, \coloneqq \,
 (\widehat{\mA}_{i_sj_sk}a_{i_t}b_{j_t} -
\widehat{\mA}_{i_tj_tk}a_{i_s}b_{j_s})^2
\ee
at the point $(\hat{a},\hat{b})$, i.e.,
\[
H_{s, t, k}  \, = \,  \nabla^2 \hat{\psi}_{s, t, k} (\hat{a}, \hat{b} ) .
\]
Since $\widehat{\mc{A}}_{ijk} = \hat{a}_i\hat{b}_j\hat{c}_k$, one can verify that
\[
H_{s,t,k}  =  \hat{z}_{s,t, k} \cdot ( \hat{z}_{s,t, k} )^T,
\]
where $\hat{z}_{s,t, k}$ is the column vector
\[
 \hat{z}_{s,t, k}  =  \sqrt{2}  \hat{c}_k
 \left[\begin{array}{cc}
\hat{a}_{i_s}\hat{b}_{j_s}\hat{b}_{j_t} e_{i_t} - \hat{a}_{i_t}\hat{b}_{j_t}\hat{b}_{j_s}e_{i_s}  \\
\hat{a}_{i_s}\hat{b}_{j_s}\hat{a}_{i_t} e_{j_t} - \hat{a}_{i_t}\hat{b}_{j_t}\hat{a}_{i_s}e_{j_s}
\end{array}\right] \in \re^{n_1 + n_2 }.
\]
When $\hat{a}_{i_s}\hat{b}_{j_s}\hat{a}_{i_t}\hat{b}_{j_t} \ne 0$, we have the expression
\[
\hat{z}_{s,t, k} \, =  \, \sqrt{2}   \hat{c}_k
\hat{a}_{i_s}\hat{b}_{j_s}\hat{a}_{i_t}\hat{b}_{j_t}
\bbm
\frac{e_{i_t}}{\hat{a}_{i_t}}  - \frac{e_{i_s}}{\hat{a}_{i_s}}  \\
\frac{e_{j_t}}{\hat{b}_{j_t}}  - \frac{e_{j_s}}{\hat{b}_{j_s}}
\ebm  .
\]
For the function
\be
\hat{\psi} (a, b) \, \coloneqq \,
\sum_{k=1}^{n_3} \sum_{1 \le s < t \le m_k}   \hat{\psi}_{s,t,k} (a, b),
\ee
its Hessian at $(\hat{a}, \hat{b})$ is the matrix
\be\label{eq:Hab}
H(\hat{a},\hat{b}) \, \coloneqq  \,  \sum_{k=1}^{n_3}  \sum_{1 \le s < t \le m_k}
  \hat{z}_{s,t, k}    \cdot ( \hat{z}_{s,t, k}   )^T  .
\ee
Let $\hat{Z}$ be the  matrix whose rows are the vectors $(\hat{z}_{s,t, k})^T$, i.e.,
\be \label{hatZ}
\hat{Z}=\begin{bmatrix}
 \vdots\\
(\hat{z}_{s,t, k})^T \\
 \vdots
\end{bmatrix}.
\ee
Furthermore, we let
\be \label{matZ}
Z \coloneqq
\bbm  \hat{Z} \\ \hat{D}  \ebm
 \quad \text{where} \quad
 \hat{D} = \left[
  \begin{array}{cc} \hat{a}\ & 0  \\  0\ & \hat{b} \end{array}
  \right]^T.
\ee

For the polynomial $\hat{\psi}_{s,t,k}$ defined as in \reff{hat:phi}, it holds
\[
\hat{\psi}_{s,t,k}( \pm \hat{a}, \pm \hat{b} ) = 0.
\]
This means that four pairs $(\pm \hat{a}, \pm \hat{b})$
are common zeros of polynomials in the set
\be \label{hatPhi}
\hat{\Psi}  \coloneqq
\left\{
\hat{\psi}_{s,t,k}(\hat{a}, \hat{b})
\left| \baray{c}
(i_s,\,j_s), (i_t,\,j_t) \in \Omega_k, \\
s <  t, \, k = 1, \ldots,  n_3
\earay \right.
\right\} .
\ee
The relationship between the biquadratic optimization \eqref{pri-find-ab}
and the rank-$1$ tensor completion problem is given as follows.
Recall that $\dt \mA_{\Omega}$ is the noise tensor given in (\ref{eq:mA=hatmA+dt}).
Throughout the paper, by saying a property holds when
$\dt \mA_{\Omega}$ is sufficiently small, we mean that
there exists a small $\varepsilon > 0$ such that the property holds whenever $\Vert \dt \mA \Vert_{\Omega} < \varepsilon$.

\begin{thm}  \label{th-appr-ab}
Let $\widehat{\mA} = \hat{a}\otimes \hat{b}\otimes \hat{c}$
with $\| \hat{a} \| = \| \hat{b} \| = 1$. Assume $\rank\, Z = n_1+n_2$
and $(\pm \hat{a}, \pm \hat{b})$ are the only common zeros of polynomials in $\hat{\Psi}$.
Let $(a^*, b^*)$ be a minimizer of \eqref{pri-find-ab}.
Then, there exists a constant $C_1>0$ depending on
$(\hat{a}, \hat{b}, \hat{c})$ such that
\be \label{bd:a*b*}
\|a^* \otimes b^* -  \theta \hat{a} \otimes \hat{b} \|  \le  C_1  \|\dt \mA \|_{\Omega}
\quad \text{for some} \quad \theta \in \{-1, 1\},
\ee
when $\dt \mA_{\Omega}$ is sufficiently small.
\end{thm}
\begin{proof}
To show the dependence of coefficients of $f(a,b)$ on $\mA_{\Omega}$, we define
\[
F(a,b, \mA_{\Omega}) \, \coloneqq \,
\sum\limits_{ k=1 }^{n_3}  \sum\limits_{1 \le s < t \le m_k}
(\mA_{i_s j_s k} a_{i_t }b_{j_t} -\mA_{i_t j_t k} a_{i_s}b_{ j_s})^2  .
\]
For a given $\mA_{\Omega}$, the objective $f(a,b)$ of \eqref{pri-find-ab}
is the same as $F(a,b, \mA_{\Omega}).$
The Lagrangian function of \eqref{pri-find-ab} is
\[
L(a,b,\lambda_1,\lambda_2)  \coloneqq
F(a,b,\mA_{\Omega}) - \lambda_1 (||a||^2-1)- \lambda_2 (||b||^2-1).
\]
When $\|\mA-\widehat{\mA}\|_{\Omega}=0$,
we have $F(\hat{a},\hat{b},\widehat{\mA}_{\Omega}) = 0$.
Since $F(a,b,\widehat{\mA}_{\Omega})\ge 0$ for all $a,b$, we further have
\[
\nabla_{(a,b)}F(\hat{a},\hat{b},\widehat{\mA}_{\Omega}) \, =  \,  0.
\]
Since both $\hat{a}$ and $\hat{b}$ are nonzero vectors,
the linear independence constraint qualification condition
(see \cite[Section~5.1]{NiePoly}) holds at $(\hat{a},\hat{b})$,
thus $\hat{\lambda}_1=\hat{\lambda}_2=0$
is the unique solution to the following
{\it Karush-Kuhn-Tucker} system about $(\lmd_1, \lmd_2)$
\begin{equation}\label{max-ab}
 \begin{bmatrix}
 \nabla_{a} F(\hat{a},\hat{b},\widehat{\mA}_{\Omega}) - 2 \lambda_1\hat{a} \\
 \nabla_{b} F(\hat{a},\hat{b},\widehat{\mA}_{\Omega}) - 2 \lambda_2 \hat{b}
\end{bmatrix} = 0  .
\end{equation}
Let $\Lambda$ be the function such that
\[
\Lambda(a,b,\lambda_1,\lambda_2,{\mA}_{\Omega})\, \coloneqq
\nabla L(a,b,\lambda_1,\lambda_2) =
\begin{bmatrix}
\nabla_{a} F(a,b,\mA_{\Omega})-2\lambda_1 a \\
\nabla_{b} F(a,b,\mA_{\Omega})-2\lambda_2 b \\
 1-a^Ta\\
 1-b^Tb
\end{bmatrix}.
\]
Note that
\[
\Lambda(\hat{a},\hat{b},0,0,\widehat{\mA}_{\Omega})=0.
\]
Its Jacobian matrix at $(\hat{a},\hat{b},0,0,\widehat{\mA}_{\Omega})$ is
\[
J =
\begin{bmatrix}
  H(\hat{a},\hat{b}) &  \hat{D}^T  \\
  \hat{D} & 0
\end{bmatrix} ,
\]
where $H$ and $\hat{D}$ are given as in \reff{eq:Hab} and \reff{matZ}, respectively.
We show that $J$ is nonsingular. Suppose there exist
\[
p = \bbm u \\ v \ebm   \in\re^{n_1+n_2}, \quad h  \in\re^{2}, \quad
  H(\hat{a},\hat{b})p+ \hat{D}^T h =0,  \quad \hat{D} p =  0,
\]
which implies
\[
p^T H(\hat{a},\hat{b})p + p^T \hat{D}^T h  = p^T H(\hat{a},\hat{b})p  =0.
\]
Moreover, by (\ref{eq:Hab}), we have
\[
0 = p^T H(\hat{a},\hat{b}) p  =   \sum_k\sum_{s,t}
   \left| (\hat{z}_{s,t,k} )^T p\right|^2 \ge  0.
\]
So $(\hat{z}_{s,t,k} )^T p = 0$ for all indices $s,t,k$ in the range.
Since $\hat{D}p=0$, we know $\hat{a}^Tu = \hat{b}^T v = 0$,
hence $Zp = 0$. Since $Z$ has full column rank, we get $p=0$.
This shows that $J$ is nonsingular.

By the Implicit Function Theorem (see \cite[Theorem~9.28]{rudin1976}),
there exist an open neighbourhood $\mc{V}$ of $(\hat{a},\hat{b},\widehat{\mA}_{\Omega})$
and a continuously differentiable function
\[
\varphi = (\varphi_1, \varphi_2, \varphi_3, \varphi_4):\,
\mc{U} \to  \mc{V},
\]
defined in an open neighbourhood $\mc{U}$ of $\widehat{\mA}_{\Omega}$, such that
\[
\varphi_1(\widehat{\mA}_{\Omega}) = \hat{a},\quad
\varphi_2(\widehat{\mA}_{\Omega}) = \hat{b},\quad
\varphi_3(\widehat{\mA}_{\Omega}) = 0, \quad
\varphi_4(\widehat{\mA}_{\Omega}) = 0,
\]
\[
\Lambda(\varphi_1(\mA_{\Omega}),\,\varphi_2(\mA_{\Omega}),\,
\varphi_3(\mA_{\Omega}),\,\varphi_4(\mA_{\Omega}),\,\mA_{\Omega}) = 0
\quad \text{for all} \quad  \mA_{\Omega}\in \mc{U},
\]
and $\varphi(\mA_{\Omega})$ is the unique solution to $\Lambda(a,b,\lambda_1,\lambda_2,{\mA}_{\Omega}) = 0$
such that 
\[
(\varphi(\mA_{\Omega}),\mA_{\Omega})\in\mc{V}
\quad \text{for every} \quad \mA_{\Omega}\in \mc{U}.
\]
Since $\varphi$ is continuously differentiable, there exist scalars $\varepsilon_0>0$ and $C_0>0$ such that $\mc{U}_0\coloneqq \{ \mA_{\Omega} : \Vert\mA - \widehat{\mA}\Vert_{\Omega} < \varepsilon_0 \} \subseteq \mc{U}$,
and for all $\mA_{\Omega}\in \mc{U}_0$ it holds that
\be  \label{gdA<=C0dA}
\| \big( \varphi_1(\mA_{\Omega}), \varphi_2(\mA_{\Omega}) \big) -
   \big( \varphi_1(\widehat{\mA}_{\Omega}), \varphi_2(\widehat{\mA}_{\Omega}) \big)
   \|  \le C_0 \|  \dt \mA_{\Omega} \|.
\ee

Note that
$
F(\pm \hat{a},  \pm \hat{b}, \widehat{\mA}_{\Omega}) =  0.
$
By the assumption, $(\pm \hat{a}, \pm \hat{b})$ are the only global minimizers of
the function $F(a,b,\widehat{\mA}_{\Omega})$ on the set
\[  S = \{(a,b): \| a \| = \| b \| = 1 \}. \]
For each $\dt >0$, we have
\[
F(a, b, \widehat{\mA}_{\Omega}) \, > \, 0  \quad \text{for all} \quad
(a,b) \in S \setminus N_{\dt},
\]
where
\[
N_\dt  \coloneqq
\bigcup_{ (\theta_1,  \theta_2) \in \{ (\pm 1, \pm 1 ) \} }
\{ (a, b):  \big(a- \theta_1  \hat{a} \big)^2  +
\big(b - \theta_2  \hat{b} \big)^2 < \dt \}.
\]
Since $S \setminus N_{\dt}$ is compact, there exists $\eps >0$ depending on $\dt$, such that
\[
F(a, b, \widehat{\mA}_{\Omega}) \, \ge  \, \eps  \quad \text{for all} \quad
(a,b) \in S \setminus N_{\dt} .
\]
Note that $\dt \mA_{\Omega} = \mA_{\Omega} - \widehat{\mA}_{\Omega}$.
Since $F$ is continuous, there exists $\varepsilon_1 > 0$
such that for all $\Vert \dt \mA\Vert_{\Omega} < \varepsilon_1$, it holds
\be \label{F>=eps/2}
F(a, b,  \mA_{\Omega}) \, \ge  \, \eps/2  \quad \text{for all} \quad
(a,b) \in S \setminus N_{\dt} .
\ee
Note that for every $\mA_{\Omega}\in \mc{U}$, $\varphi(\mA_{\Omega})$ is the only solution to $\Lambda(a,b,\lambda_1,\lambda_2,{\mA}_{\Omega}) = 0$ such that $(\varphi(\mA_{\Omega}),\mA_{\Omega})\in\mc{V}$.
Thus, we fix $\dt > 0$ small enough so that for every $\mA_{\Omega} \in \mc{U}$, the pair
$(\varphi_1(\mA_{\Omega}), \varphi_2(\mA_{\Omega}))$ is the only critical point
of  \eqref{pri-find-ab} in the open ball given by
\[
(a - \hat{a})^2 + (b - \hat{b})^2   <   \dt.
\]
Since for every $(\theta_1, \theta_2) \in \{ (\pm 1, \pm 1) \}$, it holds
\[
\begin{gathered}
\nabla_{a}F(\theta_1a,\theta_2b,\mA_{\Omega}) = \theta_1\nabla_{a}F(a,b,\mA_{\Omega}) ,\\
\nabla_{b}F(\theta_1a,\theta_2b,\mA_{\Omega}) = \theta_2\nabla_{b}F(a,b,\mA_{\Omega}),
\end{gathered}
\]
we have $\Lambda(a,b,\lambda_1,\lambda_2,{\mA}_{\Omega}) = 0$ if and only if
$\Lambda( \pm a, \pm b,\lambda_1,\lambda_2,{\mA}_{\Omega}) = 0$.
So, for each pair $(\theta_1, \theta_2) \in \{ (\pm 1, \pm 1) \}$,
$( \theta_1 \varphi_1(\mA_{\Omega}), \theta_1 \varphi_2(\mA_{\Omega}))$
is the only critical point of (\ref{pri-find-ab}) in the open ball
\[
(a - \theta_1\hat{a})^2 + (b - \theta_2\hat{b})^2   <   \dt.
\]
By the continuity of $F$ and $\varphi$, there exists $\varepsilon_2<\varepsilon_1$ such that for all $\mc{A}_{\Omega}$ satisfying $\Vert \dt \mA\Vert _{\Omega} < \varepsilon_2$, it holds
\[
\begin{gathered}
F(\varphi_1(\mA_{\Omega}), \varphi_2(\mA_{\Omega}),  \mA_{\Omega}) \, \le  \, \eps/3, \\
\big(\varphi_1(\mA_{\Omega}) - \hat{a} \big)^2 +  \big( \varphi_2(\mA_{\Omega}) - \hat{b} \big)^2 < \dt.	
\end{gathered}
\]
This further implies the following holds for all $(\theta_1, \theta_2) \in \{ (\pm 1, \pm 1) \}$:
\be  \label{F<=eps/3}
\begin{gathered}
F( \theta_1 \varphi_1(\mA_{\Omega}), \theta_2 \varphi_2(\mA_{\Omega}),  \mA_{\Omega}) \, \le  \, \eps/3 ,   \\
\big(\theta_1 \varphi_1(\mA_{\Omega}), \theta_2 \varphi_2(\mA_{\Omega}) \big) \in S \cap N_\dt.
\end{gathered}
\ee

Since the feasible set of (\ref{pri-find-ab}) is compact,
it must have global minimizers for all $\mc{A}_{\Omega}$.
Thus, by (\ref{F>=eps/2}) and (\ref{F<=eps/3}),
the global minimizers of (\ref{pri-find-ab}) exist,
and they all belong to $N_{\dt}$, when $\Vert \dt \mA\Vert_{\Omega} < \varepsilon_2<\varepsilon_1$.
Moreover, the only critical points of (\ref{pri-find-ab}) in $N_{\dt}$
are $( \pm \varphi_1(\mA_{\Omega}), \pm \varphi_2(\mA_{\Omega}))$, and
\[ F( a,  b, \mA_{\Omega}) = F(\pm a, \pm b,  \mA_{\Omega}). \]
Therefore, when $\dt \mA_{\Omega}$ is sufficiently small,
$(\pm \varphi_1(\mA_{\Omega}), \pm \varphi_2(\mA_{\Omega}))$
are the only global minimizers of \eqref{pri-find-ab},
and $(a^*, b^*)$ must be one of these four pairs.
Since $\varphi_1(\widehat{\mA}_{\Omega}) = \hat{a}$
and $\varphi_2(\widehat{\mA}_{\Omega}) = \hat{b}$,
the tensor product $a^* \otimes b^*$ must be sufficiently close to
either $\hat{a} \otimes \hat{b}$ or  $-\hat{a} \otimes \hat{b}$.
Therefore, for all $\mA_{\Omega}$ such that $\Vert \dt \mA\Vert_{\Omega} < \min(\varepsilon_0,\varepsilon_2)$,
\reff{gdA<=C0dA} implies that there is a constant $C_1>0$
such that \reff{bd:a*b*} holds for $\theta = 1$ or $\theta = -1$.
Moreover, the constant $C_1$ only depends on $(\hat{a}, \hat{b}, \hat{c})$.
\end{proof}

\begin{rmk}\label{rmk:assumptions}\rm
The following remarks provide further expositions
for Theorem~\ref{th-appr-ab} and its assumptions.
\bit

\item [(i)] Theorem~\ref{th-appr-ab} shows the existence of $\varepsilon$
such that \reff{bd:a*b*} holds. However, the theorem does not provide
an explicit estimation of such $\varepsilon$. This is interesting for future work.

\item [(ii)] The assumption that $(\pm \hat{a}, \pm \hat{b})$
are the only common zeros of $\hat{\Psi}$ does not necessarily imply that they are the only global minimizers of (\ref{pri-find-ab}).
It concerns the uniqueness of common solutions (up to switching of signs)
for $\hat{\Psi}$, which is given by the noise-free tensor $\widehat{\mc{A}}$
instead of the observed tensor $\mc{A}$ that defines (\ref{pri-find-ab}).
If there exists a pair of indices $(i,j)$ such that $(i,j,k)\notin \Omega$ for all $k$,
then this assumption typically does not hold.
For such a case, one can check that $(a,b) = (e_i,e_j)$
is a global minimizer of (\ref{pri-find-ab})
since $f(e_i,e_j) = 0$, and thus solving (\ref{pri-find-ab})
may yield a minimizer $e_i\otimes e_j$ that is far away from
both $\pm \hat{a}\otimes \hat{b}$.

\item [(iii)] Our algorithm is applicable,
regardless of whether the assumptions of Theorem~\ref{th-appr-ab} hold or not.
Even for the case where there exists $(i,j)$ such that $(i,j,k)\notin \Omega$ for all $k$,
one may heuristically estimate some entry $\mc{A}_{ijk}$ and add the triple $(i,j,k)$ to $\Omega$ before solving (\ref{pri-find-ab}). This can prevent the algorithm from falling to the unwanted trivial solution $(e_i,e_j)$. We refer to Subsection~\ref{sc:application} for this.

\eit
\end{rmk}

\subsection{Retrieval of rank-$1$ tensor completion}

Suppose that $(a^*,b^*)$ is an optimizer pair of $\eqref{pri-find-ab}$.
We look for the vector $c^*\in \re^{n_3}$ such that the distance
$\| \mc{A} - a^*\otimes b^*\otimes c^*\|_{\Omega}$ is minimum.
This requires to solve the linear least square problem:
\be  \label{eq-find-c}
\min_{c\in\re^{n_3}}  \quad  g(c)  \coloneqq
\sum_{(i,j,k)\in\Omega}(\mathcal{A}_{ijk}-a^*_i b^*_j c_k)^2.
\ee
We remark that even if the perturbation $\dt \mA$ is small,
a minimizer $c^*$ of \reff{eq-find-c} may not give a satisfying rank-$1$
tensor completion $a^*\otimes b^*\otimes c^*$, i.e.,
the distance $\| \mc{A} - a^*\otimes b^*\otimes c^*\|_{\Omega}$ may be large.
The following is such an exposition.

\begin{Example}\label{ex:singular}\rm
Let $\mc{A}\in \re^{3\times 3 \times 3}$ be a partially given tensor with
\[ \Omega = [2]\times [2] \times [3], \quad \mc{A}_{ijk} =
\hat{a}_i\otimes\hat{b}_j\otimes\hat{c}_k + \dt\mc{A}_{ijk},\ (i,j,k)\in\Omega, \]
where each $\hat{a}_i=\hat{b}_j=\hat{c}_k=1$ and the noise $\dt\mc{A}_{ijk} = 10^{-6}$.
Since the variables $a_3$, $b_3$ do not appear in (\ref{pri-find-ab}), one may check that
$a^* = b^* = (0,0,1)$ minimizes (\ref{pri-find-ab}) with the minimum value $f^* = 0$.
Moreover, the objective value of \reff{eq-find-c}
is constantly equal to $12(1 + 10^{-6})^2$ for all $c\in\re^{n_3}$,
since $a^*_i b^*_j c_k = 0$ for every $(i,j,k)\in\Omega$.
Thus, no matter how $c^*$ is selected, we always have
\[
\left \|
 \hat{a} \otimes  \hat{b} \otimes  \hat{c}   -
 a^* \otimes  b^* \otimes  c^*
\right \| \ge 4 \gg \Vert \dt\mc{A}\Vert_{\Omega}.
\]
\end{Example}

The reason for the above case to happen is that the minimizer $(a^*, b^*)$
makes the least square problem \reff{eq-find-c} singular.
This leads to the following definition.

\begin{defi}
A pair $(a,b)$ of nonzero vectors is said to be singular if there exists $k\in [n_3]$
such that $m_k = 0$ or
\[
a_{i_1}b_{j_1} = \cdots = a_{i_{m_k}}b_{j_{m_k}}  = 0 ,
\]
where $\Omega_k = \{ (i_1,j_1)  \ddd  (i_{m_k},j_{m_k}) \}$ as in \reff{Omega_k}.
The pair $(a,b)$ is said to be nonsingular if it is not singular.
\end{defi}

We remark that if $m_k = 0$ for some $k$, then the variable $c_k$
does not appear in \reff{eq-find-c} and we can choose arbitrary values for $c_k$.
Note that $(a,b)$ is singular if and only if
all four pairs $(\pm a, \pm b)$ are singular.
The objective $g(c)$ in \reff{eq-find-c} can be equivalently written as
\be
g(c) =  \sum_{k=1}^{n_3} g_k(c_k) \quad \text{where} \quad
g_k(c_k)  =  \sum_{(i,j) \in \Omega_k}(\mathcal{A}_{ijk}-a^*_i b^*_j c_k)^2.
\ee
When $(a^*,b^*)$ is nonsingular, the minimizer $c_k^*$ of $g_k(c_k)$ is explicitly given as
\be \label{ck*}
c_k^* =  \Big( \sum_{s=1}^{m_k} \mA_{i_s j_s k} a^*_{i_s} b^*_{j_s} \Big) /
  \Big( \sum_{s=1}^{m_k} ( a^*_{i_s} b^*_{j_s} )^2 \Big),
\ee
and the minimizer of \reff{eq-find-c} is
\be \label{cstar}
c^* \, = \, (c_1^* ,  \ldots,   c_{n_3}^*).
\ee

The distance between $\hat{c}$ and $c^*$ can be estimated as follows.

\begin{thm}  \label{th-appr-c}
Let $\widehat{\mA} = \hat{a}\otimes \hat{b}\otimes \hat{c}$
with $\| \hat{a} \| = \| \hat{b} \| = 1$. Assume $\rank\, Z = n_1+n_2$
and $(\pm \hat{a}, \pm \hat{b})$ are the only common zeros of polynomials in $\hat{\Psi}$.
Suppose $(\hat{a}, \hat{b})$ is nonsingular
and $\hat{c}_k \ne 0$ for all $k$.
Let $(a^*, b^*)$ be a minimizer of (\ref{pri-find-ab})
and let $c^*$ be given as in (\ref{cstar}).
There exists a constant $C >0$, depending on $(\hat{a}, \hat{b}, \hat{c})$,  such that
\be \label{eq:c*-cleK}
  \| a^* \otimes b^* \otimes c^* -  \hat{a} \otimes \hat{b} \otimes \hat{c} \|
\le   C  \| \dt \mA \|_{ \Omega },
\ee
when the perturbation $\dt \mA$ is sufficiently small.
\end{thm}

\begin{proof}
Note that $(a^*, b^*)$ is nonsingular if and only if all the four pairs
$(\pm a^*, \pm b^*)$ are nonsingular.
Since $(\hat{a}, \hat{b})$ is nonsingular for (\ref{eq-find-c}),
$(a^*, b^*)$ is also nonsingular when $\dt\mA$ is sufficiently small,
by Theorem~\ref{th-appr-ab}. So, $c^*$ is the unique optimizer
for (\ref{eq-find-c}), given in \reff{cstar}.
For convenience of analysis, we can generally assume
\reff{bd:a*b*} holds for $\theta = 1$ in Theorem~\ref{th-appr-ab};
otherwise one can replace $a^*$ by $-a^*$.

For a fixed $k  \in  [n_3]$, denote the vectors
\[
\begin{array}{ccl}
w^*  &  \coloneqq  & \bbm a^*_{i_1}b^*_{j_1} & a^*_{i_2}b^*_{j_2} & \cdots &
      a^*_{i_{m_k}}b^*_{j_{m_k}} \ebm^T,  \\
\hat{w}  &  \coloneqq  &  \bbm \hat{a}_{i_1}\hat{b}_{j_1} & \hat{a}_{i_2}\hat{b}_{j_2} & \cdots &
      \hat{a}_{i_{m_k}}\hat{b}_{j_{m_k}} \ebm^T,  \\
v &  \coloneqq  & \bbm \mathcal{A}_{i_1j_1k} & \mathcal{A}_{i_2j_2k} & \cdots &
       \mathcal{A}_{i_{m_k}j_{m_k}k} \ebm^T,  \\
\hat{v}  & \coloneqq &  \bbm \widehat{\mA}_{i_1j_1k} & \widehat{\mA}_{i_2j_2k} & \cdots &
        \widehat{\mA}_{i_{m_k}j_{m_k}k} \ebm^T .
\end{array}
\]
By the given assumption, we have
\[
|\hat{w}^T\hat{v}| > 0,  \quad  \| \hat{w} \|^2 > 0 .
\]
Let $\delta_w \coloneqq w^* - \hat{w}$ and $\delta_v  \coloneqq  v - \hat{v}$, then
\[
\Vert \dt_v\Vert  \le  \Vert \dt \mA \Vert_{\Omega} .
\]
By Theorem~\ref{th-appr-ab}, there exists a constant $D_k$ such that
\be  \label{eq:dtw<Ddtv}
\Vert\delta _w  \Vert    \le    D_k \Vert \dt \mA \Vert_{\Omega} ,
\ee
when $\dt \mA$ is sufficiently small.
Note that $\hat{c}_k =\frac{\hat{w}^T\hat{v}}{\hat{w}^T\hat{w}}$.
Thus we have
\[
\begin{aligned}
c^*_k - \hat{c}_k &=\frac{(w^*)^Tv}{ \| w^* \|^2 } - \frac{\hat{w}^T\hat{v}}{ \| \hat{w} \|^2 } = \frac{(\hat{w}+\delta_w)^T(\hat{v}+\delta_v)}{(\hat{w}+\delta_w)^T(\hat{w}+\delta_w)} - \frac{\hat{w}^T\hat{v}}{\hat{w}^T\hat{w}}\\
& = \hat{c}_k  \frac{ \frac{\dt_w^T\hat{v} + \dt_v^T\hat{w} + \dt_w^T\dt_v}{\hat{w}^T\hat{v}} - \frac{2\hat{w}^T \dt_w + \dt_w^T\dt_w}{ \| \hat{w} \|^2 } }
{1+ \frac{2\hat{w}^T \dt_w + \dt_w^T\dt_w}{ \| \hat{w} \|^2}  }.
\end{aligned}
\]
Since $\Vert\dt_w\Vert  \le D_k  \Vert  \dt \mA \Vert_{\Omega}$,
for small $\Vert  \dt  \mA  \Vert_{\Omega}$, it holds
\[ \nn \label{eq:wdtw<1/2}
\frac{2\hat{w}^T \dt_w + \dt_w^T\dt_w}{\hat{w}^T\hat{w}} \ge -\frac{1}{2}.
\]
So we can get
\[
\frac{ |c^*_k - \hat{c}_k| }{ 2|\hat{c}_k| }   \le
 \left|  \frac{\dt_w^T\hat{v} + \dt_v^T\hat{w} + \dt_w^T\dt_v}{\hat{w}^T\hat{v}} -
 \frac{2\hat{w}^T \dt_w + \dt_w^T\dt_w}{ \| \hat{w} \|^2 }  \right|   .
\]
By the Cauchy-Schwartz inequality, we have
\[
\frac{|c^*_k - \hat{c}_k| }{  2|\hat{c}_k| } \le
    \frac{\| \dt_w \| \| \hat{v} \| + \|\dt_v \| \| \hat{w} \|  + \| \dt_w \| \| \dt_v \|}{ |\hat{w}^T\hat{v}| } +
        \frac{2 \| \hat{w} \|  \| \dt_w \|  + \| \dt_w \|^2 }{ \| \hat{w} \|^2 }    .
\]
By the given assumption, there exists a constant $K_k > 0$ such that
\[
|c^*_k - \hat{c}_k|  \le   K_k \Vert  \dt \mA  \Vert_{\Omega}.
\]
Let $C_2 \coloneqq \sqrt{ n_3 } \max_k K_k$, then
\[
\Vert c^*-\hat{c}\Vert  = \Big( \sum_{k=1}^{n_3} |c^*_k - \hat{c}_k|^2 \Big)^{1/2}
 \le  C_2  \Vert  \dt \mA  \Vert_{\Omega}.
\]

Next, we note that
\[
\baray{rl}
    &  \| a^* \otimes b^* \otimes c^* -  \hat{a} \otimes \hat{b} \otimes \hat{c} \|  \\
\le &  \| a^* \otimes b^* \otimes c^* -  \hat{a} \otimes \hat{b} \otimes c^* \|  +
\|  \hat{a} \otimes \hat{b} \otimes c^* -  \hat{a} \otimes \hat{b} \otimes \hat{c} \|  \\
= &  \| a^* \otimes b^*   -  \hat{a} \otimes \hat{b} \| \|  c^* \|  +
\|  \hat{a} \otimes \hat{b} \| \|   c^* -    \hat{c} \|  \\
\le &  C  \Vert  \dt \mA  \Vert_{\Omega}
\earay
\]
for some constant $C>0$. So \reff{eq:c*-cleK} holds.
\end{proof}

Summarizing the above, we propose the following algorithm to
get rank-$1$ tensor completions.

\begin{alg}  \label{alg:r1tc}  \rm
For a partially given tensor $\mc{A}$, let $S\coloneq \emptyset$ and do:
\begin{itemize}

\item[Step~1] For each $k=1\ddd n_3$, obtain the index set $\Omega_k$.
Then, formulate the set $\Phi$ of biquadratic polynomials as in (\ref{polyset:Phi}).

\item[Step~2] Solve the biquadratic optimization (\ref{pri-find-ab})
for a set $U$ of some minimizers.

\item[Step~3] For each $(a^*,b^*)\in U$, if it is nonsingular,
then apply the formula (\ref{ck*})-(\ref{cstar}) to get $c^*$,
and update $S\coloneq S\cup \{ a^*\otimes b^*\otimes c^* \}$.

\item[Step~4] Output the set $S$ of rank-$1$ completions of $\mA$, and stop.

\end{itemize}	
\end{alg}

The major computational cost of Algorithm~\ref{alg:r1tc} is to solve the biquadratic optimization (\ref{pri-find-ab}).
In the worst case, finding global minimizers for biquadratic optimization problems is NP-hard; see \cite[Theorem~2.2]{Ling2010}.
In Section~\ref{sc:minimizer}, we propose a convex relaxation method for solving (\ref{pri-find-ab}).
For the partially given tensor $\mc{A} \in \re^{n_1 \times n_2  \times n_3}$,
this method solves the semidefinite program (\ref{moment-for-ab}) with PSD matrix variable of size $n_1n_2$-by-$n_1n_2$.
In our computational experiments, this relaxation often finds global minimizers of (\ref{pri-find-ab}).
This is demonstrated in Section~\ref{sc:minimizer}.
Semidefinite programs can be solved efficiently
by interior-point methods. We refer to \cite{Todd01}
for the complexity of solving semidefinite programs.

\section{The convex relaxation}
\label{sc:minimizer}

In this section, we propose a convex relaxation method for
solving the biquadratic optimization \reff{pri-find-ab}.

Recall the index set $\Gamma$ given in \reff{df:label:ijkl}.
We observe the expansion
\[
(aa^T) \otimes (bb^T) = \sum_{(i,j,k,l)\in\Gamma } a_i a_j b_k b_l \, E_{ij} \otimes E_{kl} ,
\]
where $E_{ij}, E_{kl}$ are the basis matrices as in \reff{basis:EijEkl}.
Recall that $\re^{\Gamma}$ denotes the space of vectors whose entries are labelled by
$(i,j,k,l) \in \Gamma$ and the linear symmetric matrix function $G[y]$ in
$y  \in \re^{\Gamma}$ is defined in \reff{eq:Gy}:
\[
G[y] \, = \,  \sum_{(i,j,k,l)\in\Gamma }y_{ijkl}  E_{ij} \otimes E_{kl}.
\]
It is worthy to note that there exist vectors $a = (a_i)$ and $b = (b_k)$ such that
\be \label{yijkl=aabb}
y_{ijkl} = a_i a_j b_k b_l \quad \text{for all} \quad   (i,j,k,l)\in\Gamma
\ee
if and only if $G[y] = (a a^T) \otimes  (bb^T)$.
By the expansion of $(aa^T) \otimes (bb^T)$ as in \reff{eq:ababT},
we can write $G[y]$ in the block form
\be   \label{mom-block}
\begin{array}{cl}
 G\left [  y \right ] &=\begin{bmatrix}
 B_{11}  & B_{12} & \cdots & B_{1n_1} \\
 B_{21}  & B_{22} & \cdots & B_{2n_1} \\
 \vdots & \vdots   &\ddots   & \vdots \\
 B_{n_11}  & B_{n_12} & \cdots  & B_{n_1n_1}
\end{bmatrix} ,
\end{array}
\ee
where each block $B_{ij}$ is also a $n_2$-by-$n_2$ symmetric matrix.
Note that $G[y]$ itself is symmetric.

We expand the biquadratic objective $f(a,b)$ of \reff{pri-find-ab} as
\[
f(a,b) = \sum\limits_{(i,j,k,l)\in\Gamma } f_{ijkl}a_i a_j b_k b_l.
\]
For $y  \in \re^{\Gamma}$, we define the bilinear function
\[
\langle f,y \rangle  \,  \coloneqq  \,  \sum\limits_{(i,j,k,l)\in\Gamma } f_{ijkl}y_{ijkl}.
\]
If \reff{yijkl=aabb} holds for $\|a\| = \| b \|=1$, then $f(a,b) = \langle f,  y \rangle$ and
\[
\trace\,G[y] =  \trace \, (aa^T) \otimes (bb^T) =    (a^Ta) (b^T b)  = 1.
\]
Therefore, the biquadratic optimization \reff{pri-find-ab} is equivalent to
\be   \label{moment_nonlinear}
\left\{
\begin{array}{cl}
  \min & \langle f, y \rangle \\
  \st   & \trace \, G[y]  = 1,  \\
       &  G[ y ] = (a a^T) \otimes  (bb^T)  \succeq 0. \\
\end{array}
\right.
\ee
If $G[y] = (a a^T) \otimes  (bb^T)$ is ignored,
then we get the convex relaxation
\be    \label{moment-for-ab}
\left\{
\begin{array}{cl}
\displaystyle \min_{y\in\re^{\Gamma}} & \langle f, y \rangle \\
\st  & \trace \, G[y] =  1,  \\
       &  G\left[ y\right]\succeq 0. \\
\end{array}
\right.
\ee
Its dual optimization problem is
\begin{equation}\label{sos-for-ab}
\left\{
\begin{array}{cl}
 \mbox{max} &\gamma\\
\st & f(a, b) - \gamma \cdot \left \| a\otimes b \right \| ^2 \in \mc{Q},
\end{array}
\right.
\end{equation}
where $\mc{Q}$ is the cone of sums of squares of bilinear forms in $(a,b)$:
\[
\mc{Q} \, \coloneqq \, \left\{(a\otimes b)^T Y(a\otimes b):\,
Y\in\mathcal{S}_+^{n_1n_2}
\right\}.
\]
The following is a basic property for the relaxation pair
\reff{moment-for-ab}-\reff{sos-for-ab}.

\begin{prop}\label{prop:relx}
Denote by $f^*, p^*, d^*$ the optimal values for (\ref{pri-find-ab}),
(\ref{moment-for-ab}) and (\ref{sos-for-ab}), respectively.
Then, both $d^*$ and $p^*$ are attainable, and
\[ d^*= p^*\le f^*. \]
\end{prop}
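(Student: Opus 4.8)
The statement bundles three claims: weak duality ($d^* \le p^*$), the reverse inequality $p^* \le f^*$ coming from the relaxation being a relaxation, and strong duality together with attainment ($d^* = p^*$, both achieved). The plan is to prove these in the order that minimizes technical overhead: first the easy chain $p^* \le f^*$, then weak duality $d^* \le p^*$, and finally the harder strong-duality-with-attainment claim via a Slater-type argument.

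\textbf{Step 1: $p^* \le f^*$.} I would argue that \reff{moment-for-ab} is genuinely a relaxation of \reff{pri-find-ab}. Given any feasible $(a,b)$ for \reff{pri-find-ab} (so $\|a\| = \|b\| = 1$), set $y_{ijkl} = a_i a_j b_k b_l$ for $(i,j,k,l) \in \Gamma$. Then by \reff{eq:ababT} we have $G[y] = (aa^T) \otimes (bb^T) \succeq 0$, and $\trace\, G[y] = (a^Ta)(b^Tb) = 1$, so $y$ is feasible for \reff{moment-for-ab}. Since $f(a,b) = \langle f, y\rangle$ for this $y$, the infimum over the larger feasible set of \reff{moment-for-ab} is no larger than $f(a,b)$. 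Taking the infimum over all feasible $(a,b)$ gives $p^* \le f^*$.

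\textbf{Step 2: weak duality $d^* \le p^*$.} This is the standard SDP weak-duality inequality, but I would spell it out to keep the paper self-contained. For any $y$ feasible in \reff{moment-for-ab} and any $\gamma$ feasible in \reff{sos-for-ab}, the constraint says $f(a,b) - \gamma \|a \otimes b\|^2 = (a\otimes b)^T Y (a \otimes b)$ for some $Y \succeq 0$. Reading off the coefficient vectors in the monomials $a_i a_j b_k b_l$ and pairing against $y$, together with $\langle (a\otimes b)(a\otimes b)^T\text{-coefficients}, y\rangle = \trace(Y \cdot G[y])$ and $\|a\otimes b\|^2$ pairing to $\trace\, G[y] = 1$, yields $\langle f, y\rangle - \gamma = \trace(Y\, G[y]) \ge 0$ since $Y, G[y] \succeq 0$. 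Hence $\gamma \le \langle f, y\rangle$, and taking sup over $\gamma$ and inf over $y$ gives $d^* \le p^*$.

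\textbf{Step 3: strong duality and attainment.} This is the main obstacle, and I would handle it by verifying a Slater / strict-feasibility condition so that standard conic-duality theory (e.g.\ the SDP strong-duality theorem) forces $d^* = p^*$ with both values attained. The key point is to exhibit a strictly feasible primal point: take $y$ corresponding to a positive-definite $G[y]$ with $\trace\, G[y] = 1$, for instance the normalized average of $(aa^T)\otimes(bb^T)$ over enough directions, giving $G[y] \succ 0$ while still meeting the trace constraint. Strict primal feasibility yields attainment of the dual optimum $d^*$ and $d^* = p^*$; a symmetric check of the dual (the zero form is feasible with large negative $\gamma$, and the feasible region is nonempty and bounded above) gives attainment of $p^*$. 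The delicate part is confirming that the single linear constraint $\trace\, G[y] = 1$ does not destroy strict feasibility and that the primal feasible set, while unbounded in $\re^\Gamma$, still produces an attained minimum because the objective is linear and bounded below on the spectrahedron $\{G[y] \succeq 0,\ \trace\, G[y]=1\}$, which is compact in the relevant quotient. I would close by combining Steps~1--3 into $d^* = p^* \le f^*$.
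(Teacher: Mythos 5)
Your proposal is correct and follows essentially the same route as the paper: show $p^*\le f^*$ by lifting any feasible $(a,b)$ to a feasible $y$ with $y_{ijkl}=a_ia_jb_kb_l$, then establish $d^*=p^*$ with attainment on both sides by exhibiting strict feasibility of the primal (a positive-definite $G[y]$ with unit trace obtained by averaging $(aa^T)\otimes(bb^T)$ over the spheres, which is exactly the paper's $\tilde y$) and of the dual (taking $\gamma$ sufficiently negative so that $B-\gamma I\succ 0$), and invoking conic strong duality. The explicit weak-duality computation in your Step~2 is sound but redundant, since it is subsumed by the strong duality theorem the paper cites.
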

\begin{proof}
Let $(a,b)$ be a feasible point for (\ref{pri-find-ab}),
and let $y\in\re^{\Gamma}$ be such that \reff{yijkl=aabb} holds.
Then $y$ is feasible for (\ref{moment_nonlinear}),
so $y$ must also be feasible for (\ref{moment-for-ab}).
The minimum value of (\ref{moment_nonlinear}) is $f^*$, so $p^*\le f^*$
since \reff{moment-for-ab} is a relaxation of \reff{moment_nonlinear}.

Let $\tilde{y}\in\re^{\Gamma}$ be such that $\tilde{y}_{ijkl} =
\frac{1}{n_1n_2}$ if $i=j$ and $k=l$, and $\tilde{y}_{ijkl} = 0$ otherwise.
Then we have
$
\mathrm{Trace}\, G\left[ \tilde{y}\right]=1
$
and $G\left[ \tilde{y}\right]\succ 0$, because
\[
G\left[ \tilde{y}\right] \, = \,
\Big( \int_{\|a\| = 1} aa^T  \mt{d} \mu_1 \Big)  \otimes
\Big( \int_{\|b\|= 1} bb^T  \mt{d} \mu_2 \Big),
\]
where $\mu_1$ and $\mu_2$ denote the uniformly distributed probability measure
on the unit spheres of $\|a\|=1$ and $\|b\|=1$ respectively.
So $\tilde{y}$ is a strictly feasible point, then we know
$d^* = p^*$ and $d^*$ is attainable,
by the strong duality theorem (see \cite[Sec.~1.5]{NiePoly}).

Next, we see that
\[\mA_{i_sj_sk}a_{i_t}b_{j_t} -
\mA_{i_tj_tk}a_{i_s}b_{j_s} =
(a\otimes b)^T(\mathcal{A}_{i_sj_sk}e_{i_t}\otimes e_{j_t}-
\mathcal{A}_{i_tj_tk}e_{i_s}\otimes e_{j_s}),
\]
so there exists $B \in \mc{S}^{n_1n_2}$ such that
\[f(a,b) = (a\otimes b)^T B (a\otimes b).\]
If $\tilde{\gm}$ is negatively big enough, then $B - \tilde{\gm} I_{n_1n_2}\succ 0$ and
\[
f-\tilde{\gm}\cdot \left \| a\otimes b \right \| ^2 =
(a\otimes b)^T (B - \tilde{\gm} I_{n_1n_2}) (a\otimes b)^T \in \mc{Q}.
\]
This implies that (\ref{sos-for-ab}) is also strictly feasible,
so the optimal value $p^*$ is attainable, by the strong duality theory.
\end{proof}

\begin{rmk}\rm
   To solve the biquadratic optimization~(\ref{pri-find-ab}), one may consider to
   apply the classical Moment-SOS relaxations. However, this is not computationally efficient.
   Even for the lowest order (i.e., two), the Moment-SOS relaxation
   has a positive semidefinite (PSD) matrix variable with length equal to $\binom{n_1+n_2+2}{2}$.
   In contrast, the relaxation (\ref{moment-for-ab}) has one PSD matrix variable
   whose length is only $n_1n_2$, which is much smaller.
\end{rmk}

Suppose $y^*$ is a minimizer of \reff{moment-for-ab}.
A specially interesting case is $\rank\, G[y^*] = 1$.
By Proposition~\ref{prop:rank1}, there must exist a pair $(a^*,b^*)$ such that
\be \label{G[y*]=aa*bb*}
G[y^*] = (a^* {a^*}^T) \otimes  (b^* {b^*}^T), \quad \| a^* \| = \| b^* \| = 1.
\ee
For this case, the relaxation \reff{moment-for-ab} is tight
and the pair $(a^*,b^*)$ must be a minimizer of (\ref{pri-find-ab}).
This can be implied as a special case of Theorem~\ref{th:exact:minimizer}.
The above $a^*,b^*$ can be obtained as follows.
Write $G[y^*]$ in the block form \reff{mom-block}.
Find the indices $i^*, j^*$ such that the block
$B_{i^*i^*} \ne 0$ and the $j^*$th column of $B_{i^*i^*}$ is nonzero.
Then the vectors $a^*,b^*$ can be selected as
\be  \label{eq:astarbstar}
\boxed{
\baray{lcl}
\tilde{a}  &=& \big( (B_{11})_{j^*  j^*}, \ldots, (B_{n_11})_{j^*  j^*} \big), \\
       a^* &=&  \tilde{a}/ \| \tilde{a} \|, \\
\tilde{b}  &=& \big( (B_{i^*i^*})_{1  j^*}, \ldots, (B_{i^*i^*})_{n_2  j^*} \big), \\
       b^* &=& \tilde{b}/ \| \tilde{b} \|.
\earay }
\ee
They must satisfy \reff{G[y*]=aa*bb*} if $\rank\, G[y^*] = 1$.

As shown in Proposition~\ref{prop:relx}, both (\ref{moment-for-ab}) and its dual problem (\ref{sos-for-ab}) are strictly feasible.
Therefore, the semidefinite program (\ref{moment-for-ab}) can be solved in
polynomial time by interior point methods.
We refer to \cite{Todd01} for this.
At the minimizer $y^*$, if $\rank\, G[y^*] = 1$, then one can recover the global minimizer of (\ref{pri-find-ab}) by (\ref{eq:astarbstar}).
In our computational experiments, we often get a rank-$1$ minimizer matrix $ G[y^*]$.
This is empirically demonstrated in Example~\ref{ex:rank_vs_den}.
For these cases, the biquadratic optimization is globally solved
by the relaxation (\ref{moment-for-ab}).

For the case $\rank\, G[y^*] > 1$, the relaxation \reff{moment-for-ab}
is also tight and we can get several minimizers of \reff{pri-find-ab}
if the matrix $G[y^*]$ is separable.
This will be discussed in the next subsection.

\subsection{The case $G[y^*]$ is separable}
\label{sc:sep_Gy}

Recall that the matrix $G[y^*]$ is separable if there exist vectors
$a^{(i)}\in\re^{n_1}$, $b^{(i)}\in\re^{n_2}$ and scalars
$\lmd_i \in \re^1$ such that
\be  \label{eq:sep_decomp}
\boxed{
\baray{c}
G[y^*] = \sum\limits_{i=1}^r \lmd_i
( a^{(i)} {a^{(i)} }^T )  \otimes  ( b^{(i)} {b^{(i)} }^T ) , \\
\Vert a^{(i)}\Vert = \Vert b^{(i)}\Vert = 1,  \,  \lmd_i > 0, \, i=1, \ldots, r .
\earay
}
\ee
We refer to the subsection~\ref{sc:pre_sep} for separable matrices.
The following shows how to get minimizers for \eqref{pri-find-ab} from $G[y^*]$
when it is separable.

\begin{thm}  \label{th:exact:minimizer}
Suppose $y^*$ is a minimizer of the relaxation \reff{moment-for-ab}.
If the decomposition \reff{eq:sep_decomp} holds, then
each pair $(a^{(i)},\, b^{(i)})$ is a minimizer of \eqref{pri-find-ab}.
\end{thm}
\begin{proof}
Denote by $f^*$ and $p^*$ the optimal values of \eqref{pri-find-ab},
\eqref{moment-for-ab} respectively.
By (\ref{eq:sep_decomp}) and the equality constraint in (\ref{moment-for-ab}), we have
\[\begin{aligned}
1 = \trace \, G[y^*]  = & \sum_{s=1}^r \lmd^{(s)} \trace \,
    ( a^{(s)} {a^{(s)} }^T )  \otimes  ( b^{(s)} {b^{(s)} }^T )   \\
  = & \sum_{s=1}^r \lmd_s \Vert a^{(s)}\Vert^2 \Vert b^{(s)}\Vert^2 .
\end{aligned}
\]
Since each $\Vert a^{(s)}\Vert = \Vert b^{(s)}\Vert = 1$, it holds
$\lmd_1 + \cdots + \lmd_r = 1.$
Furthermore,  all $(a^{(i)},b^{(i)})$ are feasible for (\ref{pri-find-ab}), so
\[
 f(a^{(s)},b^{(s)}) \ge  f^*, \quad
 \sum\limits_{s=1}^{r} \lmd_s   f(a^{(s)},b^{(s)}) \ge
 \sum\limits_{s=1}^{r} \lmd_s   f^*   =   f^*.
\]
On the other hand, we also have
\[
\begin{gathered}
 \sum\limits_{s=1}^{r} \lmd_s   f(a^{(s)},b^{(s)}) =
 \sum\limits_{s=1}^{r} \sum\limits_{(i,j,k,l)\in\Gamma }
  f_{ijkl}  \lmd_s a^{(s)}_i a^{(s)}_j b^{(s)}_kb^{(s)}_l \\
=
\sum\limits_{(i,j,k,l)\in\Gamma } f_{ijkl}\sum\limits_{s=1}^{r}
     \lmd_s a^{(s)}_i a^{(s)}_j b^{(s)}_kb^{(s)}_l
=  \sum\limits_{(i,j,k,l)\in\Gamma } f_{ijkl}y^*_{ijkl}  = p^*.	
\end{gathered}
\]
Since  $p^* \le f^*$ by Proposition~\ref{prop:relx}, we get
\[
 \sum\limits_{s=1}^{r} \lmd_s  ( f(a^{(s)},b^{(s)}) - f^* ) \le 0 .
\]
All scalars $\lmd_s > 0$,  so it holds
\[
 f(a^{(i)},b^{(i)}) = f^*,\quad i=1\ddd r,
\]
which implies that every $(a^{(i)},b^{(i)})$ is a minimizer of (\ref{pri-find-ab}).
\end{proof}

The matrix $G[y^*]$ is always separable when $\rank\, G[y^*] = 1$.
But it may or may not be separable if $\rank\, G[y^*] > 1$.
As mentioned in Section~\ref{sc:pre_sep},
we can check if $G[y^*]$ is separable or not by
solving a hierarchy of semidefinite relaxations.
When $G[y^*]$ is not separable, that method can detect non-separability.
When $G[y^*]$ is separable, this method can get the vectors
$(a^{(i)},b^{(i)})$ satisfying (\ref{eq:sep_decomp}).
We refer to \cite{NieZhang16} for how to check separable matrices.

\subsection{The case that $G[y^*]$ is not separable}
\label{sc:notsep}

When $G[y^*]$ is not separable,  how to get minimizers of \reff{pri-find-ab}
becomes very difficult. A typical reason is that
the relaxation \reff{moment-for-ab} may not be tight.
For this case, we may need to solve  (\ref{pri-find-ab})
by higher order moment relaxation methods (see \cite[Chap.~6]{NiePoly}).

In this section, we show how to get an approximate optimizer pair
$(a^*, b^*)$ for \reff{pri-find-ab} when $G[y^*]$ is not separable.
Let $r = \rank\, G[y^*]$. We compute the spectral decomposition of $G[y^*]$ as
\be   \label{eq:spectral}
G[y^*]=\sum\limits_{i=1}^{r}d^{(i)}p^{(i)}(p^{(i)})^T ,
\ee
where each $d^{(i)}$ is a positive eigenvalue and
$p^{(i)}$ is the associated eigenvector.
Since $G[y^*] \succeq 0$, all $d^{(i)} > 0$.
The eigenvectors $p^{(i)}$ are orthonormal to each other.
For each $i=1\ddd r$, we select the vectors
\be  \label{eq:eig_decomp}
\boxed{
\begin{array}{lcl}
  \tilde{a}^{(i)} & \coloneqq & [p_1^{(i)},p^{(i)}_{1+n_2},\cdots ,p^{(i)}_{1+(n_1-1)n_2} ]^T,  \\
  \tilde{b}^{(i)} & \coloneqq & [ p^{(i)}_1,p^{(i)}_2,\cdots , p^{(i)}_{n_2} ]^T,  \\
  a^{(i)} & \coloneqq & \tilde{a}^{(i)}/\Vert \tilde{a}^{(i)}\Vert, \\
  b^{(i)} & \coloneqq & \tilde{b}^{(i)}/\Vert \tilde{b}^{(i)}\Vert.
\end{array} }
\ee
Each above pair $(a^{(i)},b^{(i)})$ is a feasible point for \reff{pri-find-ab}.
If  $(a^{(i)},b^{(i)})$ is nonsingular for \reff{eq-find-c},
we can solve a similar least squares problem and get the best vector $c^{(i)}$.
If it is singular, we can just let $c^{(i)} = 0$.
After this is done, we can select $(a^*,b^*, c^*)$ to be the best one among them, i.e.,
select the triple $(a^{(i)}, b^{(i)}, c^{(i)})$ with the minimum distance
\[
\| \mA - a^{(i)} \otimes b^{(i)} \otimes c^{(i)} \|_{\Omega}.
\]

\subsection{A convex relaxation algorithm}
Summarizing the above, we propose the following algorithm for
solving the biquadratic optimization \reff{pri-find-ab}.

\begin{alg}  \label{alg:moment}  \rm
For the partially given tensor $\mc{A}$, do the following:
\begin{itemize}

\item[Step~1] Solve the convex relaxation \reff{moment-for-ab} for a minimizer $y^*$.

\item[Step~2] If $\rank \, G[y^*] = 1$, compute $a^*,b^*$ as in \reff{eq:astarbstar},
then output $U = \{(a^*, b^*)\}$ and stop.
If $\rank \, G[y^*] > 1$, go to Step~3.

\item[Step~3] Apply Algorithm~4.2 in \cite{NieZhang16} to check if $G[y^*]$ is separable not.
If it is, compute the decomposition \reff{eq:sep_decomp},
then output $U \coloneq \{ (a^{(i)},b^{(i)}) : i=1\ddd r \}$.
If $G[y^*]$ is not separable, go to Step~4.

\item[Step~4] Compute the spectral decomposition \reff{eq:spectral} for $G[y^*]$.
Then, get vectors $a^{(i)},b^{(i)}$ from \reff{eq:eig_decomp} and output the set
$U \coloneq \{ (a^{(i)},b^{(i)}): i=1\ddd r \}$.

\end{itemize}	
\end{alg}

If the set $U$ is output in Step~2 or Step~3,
then each point in $U$ is a minimizer for \reff{pri-find-ab}.
If the set $U$ is output from Step~4,
then each point in $U$ is an approximate minimizer for \reff{pri-find-ab}.
We would like to remark that for some cases, the convex relaxation \reff{moment-for-ab}
may not be able to return a global minimizer for the quadratic optimization \reff{pri-find-ab}.
This can be seen in Example~\ref{exmp:choi}.
For such a case, we refer to the tight relaxation method in \cite[Chap.~6]{NiePoly}
for how to solve \reff{pri-find-ab}.

\section{Numerical experiments}
\label{sc:ne}

We present numerical experiments for solving rank-$1$
tensor completion with noises.
Note that for \reff{moment-for-ab}, the dimension of $y$ is
significantly larger than the length of $G[y]$ (see Table~\ref{tb:sdpnal+:r12}
for some typical cases).
The convex relaxation \reff{moment-for-ab} is solved
by the software \texttt{SDPNAL+} \cite{SunToh,Z-S-K}
with its default settings of parameters.
When $\rank\, G[y^*]=1$, the relaxation is tight 
and we apply (\ref{eq:astarbstar}) to get the global minimizer of (\ref{pri-find-ab}).
When $\rank\, G[y^*]>1$, we use {\tt GloptiPoly 3} \cite{GloPol3} 
and {\tt SeDuMi} \cite{sturm1999using} to check if it is separable or not.
The computations are implemented in MATLAB R2022b on a Lenovo
Laptop with CPU@2.10GHz and RAM 16.0G.
For a matrix in our numerical experiments,
its rank means the numerical rank, which is counted as the number of
singular values bigger than $10^{-6}$.
For neatness, all computed vectors/matrices are displayed with four decimal digits,
while the time consumption, errors and values of $\mA$
are displayed with two decimal digits.

For a partially given tensor $\mA \in \re^{n_1 \times n_2 \times n_3}$,
the density of observed entries is
\be\label{eq:den}
\texttt{den} = |\Omega| / (n_1 n_2 n_3) .
\ee
We call \texttt{den} the observation density for $\mA$.
For the rank-$1$ TCP (\ref{eq:tcp}), since we aim at finding rank-$1$ tensors close to the observed tensor $\mc{A}$ on the index set $\Omega$,
we measure the quality of a rank-$1$ completing tensor $a^* \otimes b^* \otimes c^*$ by the absolute and relative errors
\[
\texttt{err-abs} = \| \mA-(a^*\otimes b^*\otimes c^*)  \|_{\Omega}, \quad
\texttt{err-rel} = \frac{ \texttt{err-abs} }{ \| \mA \|_{\Omega} }.
\]
Moreover, we measure the difference between the computed completion $a^*\otimes b^*\otimes c^*$ and the noise-free (\texttt{nf}) rank-$1$ tensor $\widehat{\mA}$ (when it is available) as
\[
\texttt{err-nf}=\frac{\| \widehat{\mA}-(a^*\otimes b^*\otimes c^*)  \|}{\| \widehat{\mA} \|}.
\]

\subsection{Examples with $\rank\, G[y^*] = 1$}

\begin{Example}\rm 
Consider the tensor $\mA \in \re^{ 3\times 4\times 3}$ with observed entries:
\[
\begin{array}{llll}
\mA_{111}=20.06, & \mA_{211}=40.02, &  \mA_{311}=20.15, &  \mA_{121}=-10.01,\\
\mA_{321}=-10.03, & \mA_{131}=40.35, & \mA_{341}=40.11, & \mA_{212}=40.08, \\
\mA_{312}=20.04,& \mA_{112}=20.14, & \mA_{122}=-10.09, & \mA_{322}=-10.03, \\
\mA_{222}=-20.12, & \mA_{142}=40.00, & \mA_{242}=80.57, & \mA_{113}=30.03, \\
\mA_{313}=30.18,& \mA_{133}=60.41, & \mA_{233}=121.13, & \mA_{333}=60.52. \\
\end{array}
\]
By Algorithm~\ref{alg:moment},  it took around 0.97 second
to solve (\ref{moment-for-ab}).
The largest eigenvalue of $G[y^{*}]$ is $\lambda_1 \approx 1.000$,
while the second largest one is around $10^{-12}$.
This implies that the numerical rank of $G[y^{*}]$ is one,
so the semidefinite relaxation (\ref{moment-for-ab}) is tight.
The minimizer $(a^*,b^*)$ of (\ref{pri-find-ab}) is
\[
\begin{array}{c}
 a^*  =  (0.4073,\,0.8169,\,0.4083), \quad
 b^*  =  (0.3276,\,-0.1642,\,0.6589,\,0.6579) .
\end{array}
\]
By Algorithm~\ref{alg:r1tc}, we get
\[
c^*  \,  =  \,  (149.9733,\,150.0009,\,225.0418).
\]
Finally, the errors for the completion $a^* \otimes b^* \otimes c^*$ are
\[
\begin{array}{c}
 \texttt{err-abs}  = 0.2996, \quad
 \texttt{err-rel}  =  0.0015, \quad
 \texttt{err-nf} = 0.0071. 
\end{array}
\]
\end{Example}

\begin{Example}\rm
Consider the tensor $\mA \in \re^{ 10\times 10\times 10}$ such that
\[
\mA_{ijk}  = \sin(i) \cos(j) \sin(k)  + \delta \mA_{ijk},
\]
where $\dt \mA_{ijk}$ is randomly generated by normal distribution
with zero mean and covariance $\sigma =10^{-4}$.
We select the index set
\[
\Omega  \coloneqq \{ (i,j,k) \in [10] \times [10] \times [10]:
i+j+k \equiv 0 \, \mod \, 3    \} .
\]
By Algorithm~\ref{alg:moment}, we get $y^*$ with $\rank\, G[y^*] =1$,
which took around $1.59$ seconds.
This can be checked by the eigenvalue decay of $G[y^{*}]$,
that the largest eigenvalue is $\lambda_1 \approx 1.000$, while the second largest one is around $10^{-12}$.
The numerical rank of $G[y^{*}]$ is one.
The semidefinite relaxation (\ref{moment-for-ab}) is tight,
and we get the minimizer $(a^*,b^*)$ of (\ref{pri-find-ab}):
\[
\begin{array}{ccl}
 a^*  &=& (-0.3763,\, -0.4066,\, -0.0631,\, 0.3384,\,0.4288,\,0.1249,\\
 \quad &\quad & -0.2938,\,-0.4424,\,-0.1843,\,0.2433),\\
 b^* & = &   (-0.2417,\, 0.1861,\, 0.4428,\, 0.2924,\,-0.1269,\,-0.4295,\\
 \quad &\quad & -0.3372,\,0.0651,\,0.4075,\,0.3753).
\end{array}
\]
By Algorithm~\ref{alg:r1tc}, we get
\[
\begin{array}{ccl}
c^*& =& (4.2076,\, 4.5467,\, 0.7056,\, -3.7842,\,-4.7949,\,-1.3971,\\
 \quad &\quad & 3.2851,\,4.9470,\,2.0607,\,-2.7202).\\
\end{array}
\]
The errors for the completion $a^* \otimes b^* \otimes c^*$ are
\[
\begin{array}{c}
 \texttt{err-abs}  = 1.79\cdot 10^{-4}, \quad
 \texttt{err-rel}  =  2.71\cdot 10^{-5},\quad
 \texttt{err-nf} = 5.31\cdot 10^{-5}.
\end{array}
\]
\end{Example}

\subsection{Examples with $G[y^*]$ separable}

\begin{Example}\rm
Consider the tensor $\mA \in \re^{ 3\times 3\times 4}$ with observed entries:
\[
\begin{matrix}
\mA_{311}=3.03 & \mA_{321}=6.02 &  \mA_{131}=3.02 &  \mA_{212}=1.52,\\
\mA_{112}=0.76 &\mA_{313}=0.76  &  \mA_{323}=1.53& \mA_{123}=0.51 ,\\
\mA_{233}=1.52& \mA_{333}=2.26  &  \mA_{134}=3.76.&
\end{matrix}
\]
By Algorithm~\ref{alg:moment}, it took around $0.99$ second to get the minimizer $y^*$ of (\ref{moment-for-ab}).
The biggest four eigenvalues of $G[y^{*}]$ are
\[
\lambda_1 \approx 0.4316, \, \lambda_2 \approx 0.3948, \,
\lambda_3 \approx 0.1736, \, \lambda_4 \approx 0.
\]
The numerical rank of $G[y^{*}]$ is three.
By applying the method in \cite{NieZhang16}, we certify that the matrix $G[y^*]$ is separable and get the decomposition \reff{eq:sep_decomp} with the vectors
\[
\begin{array}{ll}
a^{(1)} = (0.4471,0.8944,0.0000), & b^{(1)} = (0.9998,0.0002,0.0000) ;  \\
a^{(2)} = (0.0001,1.0000,0.0000), & b^{(2)} = (0.0002,0.9998,0.0000) ;  \\
a^{(3)} = (0.2693,0.5373,0.7992), & b^{(3)} = (0.2698,0.5385,0.7982) .
\end{array}
\]
This implies the tightness of the semidefinite relaxation (4.4), and the above pairs $(a^{(i)},b^{(i)})$ are minimizers for \reff{pri-find-ab}.
The $(a^{(1)},b^{(1)})$ and $(a^{(2)},b^{(2)})$ are singular for \reff{eq-find-c},
but $(a^{(3)},b^{(3)})$ is nonsingular. By Algorithm~\ref{alg:r1tc}, we get
\[
c^{(3)} = (14.0396,\, 10.4821,\,3.5440,\,17.4921).
\]
The errors for the completion $a^* \otimes b^* \otimes c^*$ are
\[
\begin{array}{c}
 \texttt{err-abs}  = 0.0091, \quad
 \texttt{err-rel}  =  0.0010, \quad
 \texttt{err-nf} = 0.0077.
\end{array}
\]
\end{Example}

\begin{Example}  \rm
Consider the rank-$1$ tensor $\mA \in \re^{ 3\times 3\times 3}$ with observed entries:
\[
\begin{array}{lllll}
\mA_{131}=4, & \mA_{133}=4, &  \mA_{213}=1, &  \mA_{112}=4 , & \mA_{232}=16,\\
\mA_{122}=4, & \mA_{312}=2, &  \mA_{211}=1, &  \mA_{322}=2 , & \mA_{333}=2,\\
\mA_{331}=2, & \mA_{221}=1, &  \mA_{223}=1. &    & \\
\end{array}
\]
By Algorithm~\ref{alg:moment}, it took around $0.81$ second to get the minimizer $y^*$ of (\ref{moment-for-ab}).
The matrix $G[y^*]$ is separable. The biggest three eigenvalues of $G[y^{*}]$ are
\[
\lambda_1 \approx 0.5431, \, \lambda_2 \approx 0.4569, \, \lambda_3\approx 0.
\]
The numerical rank of $G[y^{*}]$ is two.
By applying the method in \cite{NieZhang16}, we detect that
the matrix $G[y^*]$ is separable
and get the decomposition \reff{eq:sep_decomp} with the vectors
\[
\begin{array}{ll}
a^{(1)} = (\frac{2}{3},-\frac{2}{3},\frac{1}{3}), & b^{(1)} = (-\frac{\sqrt{2}}{6},-\frac{\sqrt{2}}{6},\frac{2\sqrt{2}}{3}) ,  \\
a^{(2)} = (\frac{2}{3},\frac{2}{3},\frac{1}{3}), & b^{(2)} = (\frac{\sqrt{2}}{6},\frac{\sqrt{2}}{6},\frac{2\sqrt{2}}{3}).
\end{array}
\]
Both $(a^{(1)},\,b^{(1)})$ and $(a^{(2)},\,b^{(2)})$ are nonsingular.
By Algorithm~\ref{alg:r1tc}, we get
\[
\begin{array}{ll}
c^{(1)} = (\frac{9\sqrt{2}}{2},\,-18\sqrt{2},\,\frac{9\sqrt{2}}{2}), &
c^{(2)} = (\frac{9\sqrt{2}}{2},\,18\sqrt{2},\,\frac{9\sqrt{2}}{2}).
\end{array}
\]
They give two exact rank-$1$ completions $a^{(i)}\otimes b^{(i)}\otimes c^{(i)}$,
whose completion errors are all zeros.
\end{Example}

\subsection{Examples for $G[y^*]$  not separable}

We remark that the convex relaxation \eqref{moment-for-ab}
may not be tight for solving the biquadratic optimization \reff{pri-find-ab}, i.e.,
the optimal value $p^*$ of (\ref{moment-for-ab})
may be smaller than the minimum value $f^*$ of \reff{pri-find-ab}.
This happens in the following example.

\begin{Example} \label{exmp:choi} \rm
Consider the tensor $\mA \in \re^{ 3\times 3\times 7}$
with the partially given entries:
\[
\begin{array}{c}
\mA_{111} = \mA_{221} = \mA_{331} = \mA_{115} = \mA_{116} = \mA_{117} = 1,\\
\mA_{122} = \mA_{233} = \mA_{314} = \mA_{135} = \mA_{216} = \mA_{327} = 0,\\
\mA_{112} = \mA_{113} = \mA_{114} = \sqrt{3}.
\end{array}
\]
One can check that
$f(a,b) = \Vert a\otimes b \Vert^2 + h(a,b),$
where $h(a,b)$ is the Choi's biquadratic form \cite{Choi75}:
\[
\begin{array}{l}
h(a,b) \ = \ a_1^2b_1^2 + a_2^2b_2^2 + a_3^2b_3^2 + 2(a_1^2b_2^2+a_2^2b_3^2+a_3^2b_1^2 )\\
\qquad\qquad\qquad \qquad\qquad - 2a_1a_2b_1b_2 - 2a_1a_3b_1b_3 - 2a_2a_3b_2b_3.
\end{array}
\]
It is well-known that $h(a,b)$ is nonnegative everywhere but is not a sum-of-squares.
The minimum value of \reff{pri-find-ab} is $f^* = 1$, achieved at $a=b=(1,0,0)$.
However, since $h$ is not a sum-of-squares, it holds
(recall that $\mc{Q}$ is the cone of sums of squares of bilinear forms in $(a,b)$; see (\ref{sos-for-ab}))
\[ f-f^*\Vert a\otimes b \Vert^2 = h\notin  \mc{Q}. \]
This implies that $d^* = p^* < f^*$, since the optimal value $d^*$ of (\ref{sos-for-ab})
is attainable by Proposition~\ref{prop:relx}.
Indeed, by solving (\ref{moment-for-ab}) numerically, we get
\[
p^* = d^* \approx  0.9028 < f^* = 1.
\]
By Algorithm~\ref{alg:moment}, we get $y^*$ with the matrix $G[y^*]$ computed as
\begin{scriptsize}
\[
\left[
\begin{array}{rrrrrrrrr}
0.0569 & 0.0000 &0.0000& -0.0000 & 0.0569 & 0.0000 & -0.0000 & -0.0000 & 0.0569\\
-0.0000 & 0.0122 & -0.0000 & 0.0569 &0.0000 & 0.0000 &-0.0000 & 0.0000 & 0.0000\\
0.0000 & -0.0000 & 0.2642 & 0.0000 & 0.0000 & -0.0000& 0.0569 & -0.0000 & 0.0000\\
-0.0000 & 0.0569& 0.0000 & 0.2642 & -0.0000 & 0.0000 & -0.0000 & -0.0000 & -0.0000\\
0.0569 & 0.0000 & 0.0000 & -0.0000 & 0.0569 & -0.0000 & -0.0000 & -0.0000 & 0.0569\\
0.0000& 0.0000& -0.0000 & 0.0000 & -0.0000& 0.0122 & -0.0000 & 0.0569 & 0.0000\\
-0.0000 & -0.0000 & 0.0569 & -0.0000 & -0.0000 & -0.0000 & 0.0122 & 0.0000 & -0.0000\\
-0.0000 & 0.0000 & -0.0000 & -0.0000& -0.0000 & 0.0569& 0.0000 & 0.2642 & -0.0000\\
0.0569& -0.0000 & 0.0000& -0.0000 & 0.0569 & 0.0000 & -0.0000& -0.0000 & 0.0569\\
\end{array} \right].
\]
\end{scriptsize}
It is not separable, detected by the method in \cite{NieZhang16}.
We have $\rank\, G[y^*] = 4$.
The four positive eigenvalues are
\[
d^{(1)}=d^{(2)}=d^{(3)}=0.2765,\quad d^{(4)}= 0.1706.
\]
The associated eigenvectors are respectively:
\[
\begin{array}{l}
  p^{(1)}  =(0.0000,\,0.0283,\,0.0222,\,0.1314,\,0.0000,\,-0.2085,\,0.0048,\,-0.9685,\,0.0000),\\
  p^{(2)}  =(0.0000,\,0.2085,\,0.0026,\,0.9687,\,0.0000,\,0.0283,\,0.0006,\,0.1315,\,0.0000),\\
  p^{(3)}  =(0.0000,\,-0.0012,\,0.9774,\,-0.0056,\,0.0000,\,0.0047,\,0.2104,\,0.0216,\,0.0000),\\
  p^{(4)}  =(0.5774,\,0.0000,\,0.0000,\,0.0000,\,0.5774,\,0.0000,\,-0.0000,\,0.0000,\,0.5774).\\
\end{array}
\]
By (\ref{eq:eig_decomp}), we get:
\[ \begin{array}{ll}
	a^{(1)} =(0.0000,\,0.9993,\,0.0363), & b^{(1)} = (0.0000\,0.7868,\,0.6172),\\
	a^{(2)} = (0.0000,\,1.0000,\,-0.0006),& b^{(2)} = (0.0000\,0.9999,\,0.0126),\\
	a^{(3)} =(0.0000,\,0.0266,\,0.9996), & b^{(3)} = (0.0000,\,-0.0012,\,1.0000),\\
	a^{(4)} = (1.0000,\,0.0000,\,-0.0000),& b^{(4)} = (1.0000,\,0.0000,\,0.0000).\\
\end{array}
\]
The $(a^{(1)},b^{(1)})$, $(a^{(2)},b^{(2)})$ and $(a^{(3)},b^{(3)})$ are singular for \reff{eq-find-c},
but $(a^{(4)},b^{(4)})$ is nonsingular. By Algorithm~\ref{alg:r1tc}, we get
\[
c^{(4)} = (14.0396,\, 10.4821,\,3.5440,\,17.4921).
\]
The errors for the completion $a^{(4)} \otimes b^{(4)} \otimes c^{(4)}$ are
\[
\begin{array}{c}
 \texttt{err-abs}  = 1.4142, \quad
 \texttt{err-rel}  =  0.3651,\quad
 \texttt{err-nf} = 0.3651.
\end{array}
\]

\end{Example}

\subsection{The performance for some random instances}

We first explore the distribution of $\rank\, G[y^*]$ for the optimizer $y^*$
of the convex relaxation \eqref{moment-for-ab} for some random problems.

\begin{Example}
\label{ex:rank_vs_den}  \rm
Let $n_1=n_2=n_3 =n$. For given $n$, the density \texttt{den} and a small positive scalar $\sig$,
we generate $\mA \in\re^{n \times n\times n}$ as
\[
\mA = \hat{a}\otimes \hat{b}\otimes \hat{c} + \dt\mA,
\]
where the $\hat{a}, \hat{b}, \hat{c}$ are randomly generated
by the {\tt MATLAB} function {\tt randn(n,1)}.
Each entry of $\dt\mA$ is generated randomly obeying the normal distribution
with zero mean and variance $\sig^2$.
The index set
$\Omega$ is randomly chosen using {\tt sprand} such that
$|\Omega| = \lceil n^3\cdot  \texttt{den} \rceil$.
For each given triple $(n,\, \texttt{den},\, \sig)$, we randomly generate $20$ random instances of
$\mA$ and the index set $\Omega$.
We record the percentage of instances with
$\rank\, G[y^*] =1$, $\rank\, G[y^*] =2$ and $\rank\, G[y^*] \ge 3$.
The distributions of ranks are shown in Table~\ref{tb:rank:percen}.
It is interesting to observe that $\rank\,G[y^*]$ is more likely to be $1$
as the density \texttt{den} increases.
A possible reason for this phenomena is that
the underlying rank-$1$ tensor $\hat{a} \otimes \hat{b} \otimes \hat{c}$
may be unique when the density is high, so that $G[y^*]$ has rank one.
\begin{table}[htbp]
\caption{The distribution of $\rank\, G[y^*]$ for some random TCPs.}
\label{tb:rank:percen}
\scalebox{0.85}{
\begin{tabular}{cccc|cccc}
\specialrule{.2em}{0em}{0.1em}
 $(n, \texttt{den}, \sig)$   &  $rank=1$  & rank=2 & rank$\ge 3$ & $(n, \texttt{den}, \sig)$   &  rank=1  & rank=2 & rank$\ge 3$ \\
$(11, 0.12, 10^{-4})$  & 10\%   &20\%   & 70\%
& $(16, 0.11, 10^{-2})$  &20\%  & 40\%  & 40\% \\
$(11, 0.18, 10^{-2})$ & 55\%   & 40\% & 5\%
&$(16, 0.12, 10^{-3})$  & 70\%   &25\%   & 5\%    \\
$(11, 0.23, 10^{-3})$  & 100\%  & 0  & 0
&$(16, 0.16, 10^{-4})$ & 100\% & 0  & 0\\
\specialrule{.2em}{0em}{0.1em}
$(12, 0.13, 10^{-4})$  & 20\%   & 50\%   & 30\%
& $(17, 0.10, 10^{-2})$  &10\%   & 40\%   & 50\%      \\
$(12, 0.19, 10^{-2})$  & 15\%   & 85\% & 0
& $(17, 0.12, 10^{-4})$  & 70\% & 20\%  & 10\%     \\
$(12, 0.25, 10^{-3})$  & 100\%  & 0  & 0
& $(17, 0.17, 10^{-3})$  & 100\% & 0  & 0   \\
\specialrule{.1em}{.1em}{0.1em}
 $(13, 0.10, 10^{-4})$ & 5\%   & 10\%   & 85\%
 & $(18, 0.10, 10^{-2})$  &10\%   & 60\%   & 30\%  \\
  $(13, 0.15, 10^{-2})$  &60\%   & 40\% & 0
&  $(18, 0.15, 10^{-4})$  & 90\% & 10\%  & 0 \\
 $(13, 0.21, 10^{-3})$  & 100\%  & 0  & 0
 &  $(18, 0.17, 10^{-3})$   & 100\% & 0  & 0 \\
\specialrule{.1em}{.1em}{0.1em}
 $(14, 0.10, 10^{-2})$  & 5\%   & 20\%   & 75\%
& $(19, 0.09, 10^{-2})$   &10\%   & 40\%   & 50\%     \\
 $(14, 0.15, 10^{-4})$  & 80\%   & 20\%  & 0
& $(19, 0.10, 10^{-3})$  & 20\%    & 60\%   & 20\%   \\
 $(14, 0.20, 10^{-3})$  & 100\% &  0 & 0
 & $(19, 0.15, 10^{-4})$  & 100\% & 0  & 0   \\
\specialrule{.1em}{.1em}{0.1em}
 $(15, 0.11, 10^{-2})$  &20\%   & 50\%   & 30\%
& $(20, 0.09, 10^{-3})$  &10\%   & 20\%   & 70\%   \\
 $(15, 0.15, 10^{-3})$ & 80\%  & 10\%  & 10\%
&   $(20, 0.10, 10^{-4})$  & 40\%    & 10\%   &50\%\\
 $(15, 0.20, 10^{-3})$  & 100\%   & 0 & 0
 &  $(20, 0.15, 10^{-3})$  & 100\% & 0  & 0   \\
\specialrule{.1em}{.1em}{0.1em}
\end{tabular}}
\end{table}
\end{Example}

\begin{Example}
\label{exm:errors:rand}  \rm
We explore the performance of the convex relaxation \eqref{moment-for-ab}.
For every $(n,\,den,\, \sig)$, we randomly generate $20$ random instances of noisy tensor $\mA$
and select the index set $\Omega$ as in Example~\ref{ex:rank_vs_den}.
For each instance, we solve the convex relaxation \eqref{moment-for-ab}.
Since the level of the noise varies, we consider the ratio
\[
\texttt{err-rat}=\frac{\| \mA-(a^*\otimes b^*\otimes c^*)  \|_{\Omega} }{\| \delta\mA \|_{\Omega} },
\]
so that the performance of the algorithm under different noise levels can be compared fairly.
The numerical performance is presented in Table~\ref{tb:sdpnal+:r12}.
We report the minimum and maximum values of {\tt err-rat} and {\tt err-nf},
and the average computational time (in seconds)
over all $20$ instances for each $(n,\,den,\, \sigma)$.
The dimension of $y$ and the length of the matrix $G[y]$
in \eqref{moment-for-ab} are shown in columns
labeled by \texttt{dim\,}$y$ and \texttt{len\,}$G[y]$ respectively.

\begin{table}[htbp]
\centering
\caption{The performance of the convex relaxation \eqref{moment-for-ab}.}
\label{tb:sdpnal+:r12}
\scalebox{0.82}{
\begin{tabular}{cccccc cc c cc}
\specialrule{.2em}{0em}{0.1em}
\multirow{2}{*}{$den$} & \multirow{2}{*}{$(n_1,n_2,n_3)$}& \multirow{2}{*}{$\sigma$}&  \multirow{2}{*}{\texttt{dim\,}$y$} & \multirow{2}{*}{\texttt{len\,}$G[y]$} & \multirow{2}{*}{\texttt{time}}
&\multicolumn{2}{c}{ \texttt{err-rat} } &&\multicolumn{2}{c}{ \texttt{err-nf} } \\
\cmidrule{7-8}\cmidrule{10-11}
& & &   && &  min  &  max & & min  &  max  \\
\specialrule{.1em}{.1em}{0.1em}
0.25  & (10,\,10,\,10) & $10^{-3}$ & 3025 & 100 & 0.87  &  0.7841  & 0.9372  && $2.09\cdot 10^{-3}$ & $3.99\cdot 10^{-3}$   \\
\specialrule{.1em}{.1em}{0.1em}
0.35 & (11,\,12,\,10) & $10^{-2}$ & 5148 & 132 & 1.27    & 0.8058   & 0.9697 && $1.31\cdot 10^{-3}$ & $3.11\cdot 10^{-3}$  \\
\specialrule{.1em}{.1em}{0.1em}
0.20  & (12,\,13,\,14) & $10^{-3}$ & 7098 & 156 &  1.73   & 0.7702 & 0.9301 &&$1.51\cdot 10^{-3}$ & $3.24\cdot 10^{-3}$       \\
\specialrule{.1em}{.1em}{0.1em}
0.16 & (13,\,14,\,15) & $10^{-4}$ & 9555 & 182 &  2.08   & 0.7417 & 0.9330 &&$1.86\cdot 10^{-4}$& $3.99\cdot 10^{-4}$ \\
\specialrule{.1em}{.1em}{0.1em}
0.18  & (15,\,15,\,16) & $10^{-3}$ & 14400 & 225&  3.32   & 0.8009  & 0.9586 && $1.82\cdot 10^{-3}$ & $4.11\cdot 10^{-3}$  \\
\specialrule{.1em}{.1em}{0.1em}
0.17  & (16,\,18,\,18) & $10^{-2}$ & 23256 & 288 & 3.41  & 0.8478  &   0.9653 &&$1.58\cdot 10^{-3}$ & $3.48\cdot 10^{-3}$  \\
\specialrule{.1em}{.1em}{0.1em}
0.15 & (19,\,20,\,18) & $10^{-3}$ &39900& 380 & 11.18  & 0.8820   &   0.9587    && $1.36\cdot 10^{-2}$ & $1.58\cdot 10^{-2}$  \\
\specialrule{.1em}{.1em}{0.1em}
0.12  & (20,\,20,\,20) & $10^{-3}$ & 44100& 400 & 6.59  &0.8826  &   0.9612  &&$1.52\cdot 10^{-3}$& $3.48\cdot 10^{-3}$\\
\specialrule{.1em}{.1em}{0.1em}
0.12  & (21,\,22,\,22) & $10^{-4}$ & 58443 & 462 & 7.66 & 0.8534 &  0.9494  &&$2.55\cdot 10^{-3}$& $3.08\cdot 10^{-3}$\\
\specialrule{.1em}{.1em}{0.1em}
0.12  & (23,\,24,\,25) & $10^{-3}$ & 82800& 552 & 9.63 & 0.9016 &  0.9697 &&$2.65\cdot 10^{-4}$&  $4.08\cdot 10^{-4}$ \\
\specialrule{.1em}{.1em}{0.1em}
0.12  & (25,\,25,\,25) & $10^{-4}$ & 105625 & 625 & 13.10  &0.9246 & 0.9526 && $2.31\cdot 10^{-3}$ & $3.17\cdot 10^{-3}$  \\
 \specialrule{.2em}{0em}{0.1em}
\end{tabular}}
\end{table}
\end{Example}

\subsection{A comparison between two optimization models}

A traditional approach for computing rank-$1$ tensor completion is
to solve the nonlinear optimization problem \eqref{eq-least-sec1},
while we solve the optimization problem \reff{eq-tcn-1}.
The problem \eqref{eq-least-sec1} can be solved
by nonlinear least squares (NLS) methods,
whose performance highly depends on the choice of starting points.
In practice, people often choose random ones.
In contrast, the problem \reff{eq-tcn-1}
can be solved by the convex relaxation \eqref{moment-for-ab},
which does not depend on starting points.
The following is a comparison between \reff{eq-least-sec1} and \reff{eq-tcn-1}.

\begin{Example}\rm \label{ex-nls}
Consider the TCP with $n_1 = n_2 = n_3= n$.
We compare our method with solving \eqref{eq-least-sec1} by Levenberg–Marquardt method,
which is implemented in the MATLAB function {\tt lsqnonlin},
with the default settings and a randomly generated initial point.
For each $(n,\, \texttt{den},\,\sig)$, we randomly generate $10$
instances as in Example~\ref{ex:rank_vs_den}.
We compare the minimum and maximum value of \texttt{err-nf} for both methods.
The numerical results are shown in Table~\ref{tb:comp:nls}.
In the table, \texttt{dim\,}$y$
stands for the dimension of the vector $y$ in (\ref{moment-for-ab}).
The relative errors for {\tt lsqnonlin} to solve \eqref{eq-least-sec1}
are shown in the column labeled by \texttt{err-nls},
while those errors for solving \reff{eq-tcn-1}
by the semidefinite relaxation \eqref{moment-for-ab}
are shown in the column labeled by \texttt{err-sdp}.
\begin{table}[htb]
\centering
\caption{Comparison between \reff{eq-least-sec1} and \reff{eq-tcn-1}. }
\scalebox{0.9}{
\btab{ccccccrccrr}  \specialrule{.2em}{0em}{0.1em}
\multirow{2}{*}{$den$}   & \multirow{2}{*}{$(n_1,n_2,n_3)$}   & \multirow{2}{*}{$\sig$}&\multirow{2}{*}{\texttt{dim\,}$y$} &
 \multicolumn{2}{c}{ \texttt{err-nls} } &  & \multicolumn{2}{c}{\texttt{err-sdp} }   \\
\cmidrule{5-6}  \cmidrule{8-9}
  &    & & & min  &  max   && min  & max   \\
\specialrule{.1em}{.1em}{0.1em}
0.50 & (10,10,10) & $10^{-3}$ &3025   & $1.71\cdot 10^{-3}$  & $8.74\cdot 10^{3}$
&& $1.90\cdot 10^{-3}$ &$4.90\cdot 10^{-3}$   \\
\specialrule{.1em}{.1em}{0.1em}
0.50 & (11,12,10) & $10^{-3}$ &5148  & $1.96\cdot 10^{-3}$  & $9.23\cdot 10^{3}$
&&$1.97\cdot 10^{-3}$ &$3.80\cdot 10^{-3}$    \\
\specialrule{.1em}{.1em}{0.1em}
0.30 & (12,12,13) & $10^{-4}$ &6084 & $2.52\cdot 10^{-4}$ & $1.71\cdot 10^{4}$
&&$2.64\cdot 10^{-3}$ &$9.62\cdot 10^{-2}$   \\
\specialrule{.1em}{.1em}{0.1em}
0.35 & (13,14,13) & $10^{-3}$&9555  &$1.85\cdot 10^{-3}$  &$4.01\cdot 10^{3}$
&&    $1.85\cdot 10^{-3}$ &$3.60\cdot 10^{-3}$  \\
\specialrule{.1em}{.1em}{0.1em}
0.25 & (15,14,15) & $10^{-2}$ &12600& $1.65\cdot 10^{-3}$ &$9.16\cdot 10^{5}$
&& $1.67\cdot 10^{-2}$  & $4.21\cdot 10^{-2}$  \\
\specialrule{.1em}{.1em}{0.1em}
0.30 & (15,10,10) & $10^{-3}$  &6600 & $1.59\cdot 10^{-3}$  &  $3.38\cdot 10^{3}$
&&  $1.54\cdot 10^{-3}$  & $3.47\cdot 10^{-3}$\\
\specialrule{.1em}{.1em}{0.1em}
0.22 & (15,13,15) & $10^{-2}$ &10920 & $1.76\cdot 10^{-2}$ &$3.41\cdot 10^{3}$
&&$1.74\cdot 10^{-2}$ & $3.69\cdot 10^{-2}$  \\
\specialrule{.1em}{.1em}{0.1em}
0.15 & (15,15,15) & $10^{-4}$&14400& $2.16\cdot 10^{-4}$&  $7.30\cdot 10^{3}$
&& $2.08\cdot 10^{-4}$ & $4.30\cdot 10^{-4}$ \\
\specialrule{.1em}{.1em}{0.1em}
0.15 & (16,17,18) & $10^{-3}$ &20808 &$2.11\cdot 10^{-3}$ & $3.43\cdot 10^{6}$
&& $2.14\cdot 10^{-3}$  & $3.22\cdot 10^{-3}$  \\
\specialrule{.1em}{.1em}{0.1em}
0.15 & (17,18,19) & $10^{-2}$&26163 &$2.57\cdot 10^{-4}$ & $3.07\cdot 10^{6}$
& &$2.07\cdot 10^{-4}$  & $3.60\cdot 10^{-4}$\\
\specialrule{.1em}{.1em}{0.1em}
0.12 & (20,20,20) & $10^{-4}$&44100 & $3.23\cdot 10^{-4}$ & $3.20\cdot 10^{3}$
&& $3.37\cdot 10^{-4}$ &$8.39\cdot 10^{-2}$\\
\specialrule{.2em}{0em}{0.1em}
\etab
}
\label{tb:comp:nls}
\end{table}
\end{Example}

\begin{Example}\rm
Consider TCPs with $n = n_1=n_2=n_3$. We compare the performance of
SDP solvers {\tt Mosek} \cite{Mosek}
and {\tt SDPNAL+} for solving $\eqref{moment-for-ab}$.
The default settings of {\tt Mosek} are exploited. For each $(n,\, \texttt{den},\,\sig)$, we randomly generate $10$ instances as in Example~\ref{ex:rank_vs_den}.
We report the average computational time (in seconds) for these $10$ instances.
The time consumed by {\tt Mosek} is denoted as $\tt t_{mosek}$,
and that time of {\tt SDPNAL+} is denoted as $\tt t_{sdpnal+}$.
We also compare the minimum and maximum {\texttt{err-nf}} for both SDP solvers,
which are denoted by \texttt{err-mosek} and \texttt{err-sdpnal+} respectively.
The computational results are reported in Table~\ref{ta:com:mose:sdp}.
As we can see, the errors of computed tensor completions using
{\tt Mosek} and {\tt SDPNAL+} are similar,
but {\tt SDPNAL+} has clear advantages in computational time
when the dimensions are large (e.g., $n> 10$).
Moreover, the computer was out of memory (oom) when
{\tt Mosek} is applied to solve (\ref{moment-for-ab}) when $n\ge 18$.
\begin{table}[htbp]
\caption{Compare {\tt Mosek} with {\tt SDPNAL+}  for solving $\eqref{moment-for-ab}$ }
\label{ta:com:mose:sdp}
\scalebox{0.76}{
\begin{tabular}{cccccccccccc}  \specialrule{.2em}{0em}{0.1em}
\multirow{2}{*}{$den$}   & \multirow{2}{*}{$n$} &\multirow{2}{*}{$\delta\mA$} & \multirow{2}{*}{y} & \multirow{2}{*}{$G[y]$} &\multirow{2}{*}{$\tt t_{mosek}(s)$} &
\multirow{2}{*}{$\tt t_{sdpnal+}(s)$} & \multicolumn{2}{c}{ \texttt{err-mosek} } & &  \multicolumn{2}{c}{\texttt{err-sdpnal+} }   \\
\cmidrule{8-9} \cmidrule{11-12}
& & &  & & & & min  &  max   && min  & max   \\
\specialrule{.1em}{.1em}{0.1em}
0.46 & 8& $10^{-3}$ &1296  &64 & 0.16 &  0.54 & $4.07\cdot 10^{-4}$ & $6.45\cdot 10^{-4}$& & $4.07\cdot 10^{-4}$ & $6.45\cdot 10^{-4}$  \\
\specialrule{.1em}{.1em}{0.1em}
0.45 & 9& $10^{-3}$  &2025&81 &0.43  & 0.69 & $3.48\cdot 10^{-4}$ & $7.33\cdot 10^{-4}$& & $3.48\cdot 10^{-4}$ & $7.33\cdot 10^{-4}$ \\
\specialrule{.1em}{.1em}{0.1em}
0.40 & 10& $10^{-2}$&3025   & 100& 0.96 &  0.81 & $2.40\cdot 10^{-3}$ & $5.41\cdot 10^{-3}$& & $2.40\cdot 10^{-3}$ & $5.41\cdot 10^{-3}$ \\
\specialrule{.1em}{.1em}{0.1em}
0.35 & 11& $10^{-4}$  &4356  & 121 & 2.75 & 0.99 & $4.03\cdot 10^{-5}$ & $4.83\cdot 10^{-5}$& & $4.00\cdot 10^{-5}$ & $4.83\cdot 10^{-3}$ \\
\specialrule{.1em}{.1em}{0.1em}
0.30 & 12& $10^{-3}$&6084  & 144 & 6.64  &  1.30 & $3.42\cdot 10^{-4}$ & $6.97\cdot 10^{-4}$& & $3.42\cdot 10^{-4}$ & $6.97\cdot 10^{-4}$\\
\specialrule{.1em}{.1em}{0.1em}
0.25 & 13& $10^{-2}$  & 8281& 169& 14.72  & 1.92 & $3.12\cdot 10^{-3}$ & $5.97\cdot 10^{-3}$& & $3.12\cdot 10^{-3}$ & $5.97\cdot 10^{-3}$\\
\specialrule{.1em}{.1em}{0.1em}
0.23 & 14& $10^{-3}$  &11025& 196& 36.11  & 2.98 & $3.78\cdot 10^{-4}$ & $7.06\cdot 10^{-4}$& & $3.78\cdot 10^{-4}$ & $7.06\cdot 10^{-4}$ \\
\specialrule{.1em}{.1em}{0.1em}
0.22 & 15& $10^{-4}$  &14400& 225& 75.58  & 3.37 & $2.74\cdot 10^{-5}$ & $7.11\cdot 10^{-5}$& & $2.72\cdot 10^{-5}$ & $7.11\cdot 10^{-4}$ \\
\specialrule{.1em}{.1em}{0.1em}
0.20 & 16& $10^{-3}$  &18496& 256& 170.13 & 4.97 & $3.41\cdot 10^{-4}$ & $3.90\cdot 10^{-4}$& & $3.41\cdot 10^{-4}$ & $3.91\cdot 10^{-4}$ \\
\specialrule{.1em}{.1em}{0.1em}
0.18 & 17& $10^{-3}$  &23409& 289& 364.17 & 9.75 &$3.66\cdot 10^{-4}$ & $4.23\cdot 10^{-4}$& & $3.66\cdot 10^{-4}$ & $4.23\cdot 10^{-3}$ \\
\specialrule{.1em}{.1em}{0.1em}
0.19 & 18& $10^{-2}$  &29241& 324& oom & 11.07& oom & oom & & $3.21\cdot 10^{-3}$ & $4.51\cdot 10^{-3}$ \\
\specialrule{.1em}{.1em}{0.1em}
0.17 & 19& $10^{-3}$  & 36100 & 361& oom & 13.63 & oom & oom & & $3.21\cdot 10^{-3}$ & $4.51\cdot 10^{-3}$ \\
\specialrule{.1em}{.1em}{0.1em}
0.16 & 20 & $10^{-4}$ & 44100 & 400 & oom & 18.36 & oom & oom & & $4.05\cdot 10^{-4}$ & $6.42\cdot 10^{-4}$ \\
\specialrule{.2em}{0em}{0.1em}
\end{tabular}}
\end{table}
\end{Example}

\subsection{Applications in image recovery and low rank tensor completion}
\label{sc:application}

Tensor completion has important applications in image recovery.
A color image can be represented as an $n_1\times n_2\times n_3$ tensor $\mc{A}$,
where $n_1 \times n_2$ is the resolution and $n_3=3$ represents three channels of colors (red, green, and blue). Typically, the tensor $\mc{A}$ has noises and missing entries, due to physical sensor defects, errors in data transmission, or corruption during storage.
On the other hand, image tensors are usually low rank since the pixels exhibit strong local smoothness and the color channels are highly correlated.
Thus, for the partially given noisy tensor $\mc{A}$ with the index set of observations $\Omega$, one is usually interested in finding a low rank tensor $\widehat{\mA}$ such that
\[ (\mc{A} - \widehat{\mA})_{ijk} \quad
\mbox{is small for} \quad (i,j,k)\in \Omega.\]

Algorithm~\ref{alg:r1tc} mainly deals with rank-$1$ tensor completions with noises.
It can also be generalized to find low-rank tensor completions.
This is particularly useful in recovering real-world images.
As shown in the recent work \cite{Cifuentes2025},
one can obtain low rank tensor completions
by repeatedly computing rank-$1$ completions.
We describe how to do this in the following.

For a partially given tensor $\mc{A}\in \re^{n_1\times n_2\times n_3}$
with the index set $\Omega$ and a priori given rank $r$,
let $\mA^* =0$ be the all-zero tensor of the same dimension.
Initialize $\ell\coloneqq 0$, $\mA^{(0)}:= \mA$.
For $\ell < r$, do the following:
\begin{itemize}

\item[1.] Apply Algorithm~\ref{alg:r1tc} to find a rank-$1$ completion $a^{(\ell)}\otimes b^{(\ell)}\otimes c^{(\ell)}$ for $\mA^{(\ell)}$.

\item[2.] If $\widetilde{A}\coloneqq \mA^* + a^{(\ell)}\otimes b^{(\ell)}\otimes c^{(\ell)}$ gives a better completion (i.e., the error of $\widetilde{A}$ is smaller than that of $\mA^*$), then update
    \[
    \mA^* \coloneqq \widetilde{A}, \quad
    \mA^{(\ell + 1)} \coloneqq  \mA^{(\ell)} - a^{(\ell)}\otimes b^{(\ell)}\otimes c^{(\ell)},
    \]
and let $\ell\coloneqq \ell + 1$.
Otherwise, output $\mA^*$ as the low rank completion for $\mA$ and stop.

\end{itemize}

We apply the above procedure to complete real-world image tensors
$\mc{A}$ with $n_1 = 128$, $n_2 = 96$.
We consider the cases where the density {\tt den} of observed entries,
given as in (\ref{eq:den}), is $0.5$ and $0.7$.
Since the PSD variable in the semidefinite program (\ref{moment-for-ab}) is $n_1n_2$-by-$n_1n_2$,
the computer is out of memory to implement Algorithm~\ref{alg:r1tc} directly.
Instead, we partition this image tensor into $8\times 6$ smaller tensors,
each of size $16\times 16\times 3$, and apply our method on each one of them.
Moreover, since the total number of pixels $n_1n_2$ is relatively large,
there may exist a pair $(i,j)$ such that $(i,j,k)\notin \Omega$ for all $k$.
For such a pair $(i,j)$, we heuristically let
\[\begin{gathered}
\Theta \coloneqq \{ (i',j') : i-1\le i'\le i+1, j-1\le j'\le j+1  \},\\
\mc{A}_{ijk} \coloneqq \frac{1}{|\Omega_k\cap \Theta|}
\sum_{(i',j')\in \Omega_k\cap \Theta} \mc{A}_{i'j'k}.
\end{gathered}\]
Then, we add the above triples $(i,j,k)$ to $\Omega$ before doing tensor completions.
The recovered images are shown in Figure~\ref{fig:img_real}.
For ${\tt den} = 0.5$ and $0.7$,
the column ``observation'' shows the image corresponds to the partially given tensor $\mc{A}$,
where the missing entries are highlighted in distinct colors.
For $r = 1,2,3$, the images under ``rank $\le r$'' are produced
by the above procedure with the maximal rank equal to $r$.
Finally, we remark that in our computation,
the minimizer matrices of (\ref{moment-for-ab}) are all rank-$1$ when $\ell = 1$,
and are mostly rank-$1$ when $\ell > 1$.
For instance, there are $2$ exceptions for $\ell=2$ and one exception for
$\ell =3$ among $48$ blocks when ${\tt den} = 0.5$.
All exceptions occur when the error of the last iterate is small (e.g., ${\tt err-rel}< 0.0015$),
which means the remainder $\mc{A}^{(\ell-1)}$
is small and computed completion in the last iterate is close to $\mc{A}$.
\begin{figure}[htbp]
    \centering
    \newcommand{\imgwidth}{0.18\linewidth}
    \vspace{2mm}
    \begin{tabular}{@{}>{}m{\labelwidth}@{\hspace{3em}} m{\imgwidth} m{\imgwidth} m{\imgwidth} m{\imgwidth}@{}}
    {\tt den} & \quad observation & \quad\ rank $\le  1$ & \quad\ rank $\le  2$ & \quad\ rank $\le  3$ \\
    $0.5$ & \includegraphics[width=\linewidth]{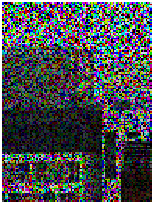} & \includegraphics[width=\linewidth]{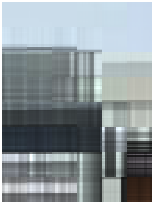} & \includegraphics[width=\linewidth]{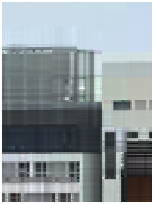} & \includegraphics[width=\linewidth]{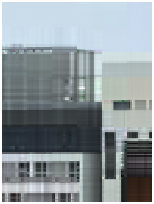} \\
    $0.7$ & \includegraphics[width=\linewidth]{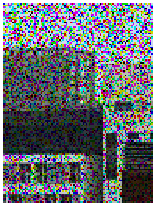} & \includegraphics[width=\linewidth]{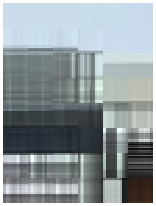} & \includegraphics[width=\linewidth]{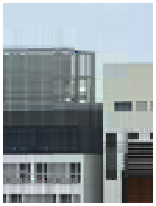} & \includegraphics[width=\linewidth]{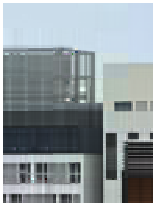} \\
    \end{tabular}
    \caption{Results of completing a real-world image tensor}
    \label{fig:img_real}
\end{figure}

\section{Conclusions}
\label{sc:con}

This paper studies the rank-$1$ tensor completion problem for cubic order tensors
when there are noises for observed tensor entries.
First, we propose a robust optimization model for getting a rank-$1$ tensor completion.
It is a biquadratic optimization problem with sphere constraints.
When the observed tensor is sufficiently close to a rank-$1$ tensor,
we show that the optimizer of this biquadratic optimization
will give a close rank-$1$ completing tensor.
Second, we give an efficient convex relaxation for solving this biquadratic optimization.
When the optimizer matrix $G[y^*]$ is separable, we show
how to get optimizers for the biquadratic optimization.
In particular, if $\rank\, G[y^*]=1$, then the optimizer matrix $G[y^*]$ is always separable.
When $G[y^*]$ is not separable, we apply its spectral decomposition to
obtain approximate optimizers. We refer to the tight relaxation method in
\cite[Chap.~6]{NiePoly} for such cases.
Numerical experiments are presented to show the efficiency of
this biquadratic optimization model and the proposed convex relaxation.

There are interesting applications of rank one tensor completions,
which can be found in Section~\ref{sc:application}.
An interesting direction for future research is to explore the applications in broader ranges of fields.
Moreover, there are some interesting questions for future work:
\begin{itemize}

\item When all minimizers of (\ref{pri-find-ab}) are singular,
how can we find rank-$1$ tensor completions efficiently?

\item How can we efficiently find rank-$1$ tensor completions
when the tensor order $m \ge 4$?

\end{itemize}

\section*{Availability of data and materials}
\noindent This paper does not analyze or generate any dataset.
\section*{Declarations}
\noindent{Competing Interests.\ } The authors have no competing interests to declare that are relevant to the content of this article.

\section*{Acknowledgements}
\noindent Jiawang Nie is partially supported by the NSF grant DMS-2513254
and the AFOSR grant FA9550-25-1-0298.


\begin{thebibliography}{99}

\bibitem{Acar-11}
Acar, E., Dunlavy, D.M., Kolda, T.G., Mørup, M.:
Scalable tensor factorizations for incomplete data.
Chemom. Intell. Lab. Syst. 106, 41--56 (2011)

\bibitem{Mosek}
ApS, M.:
Mosek optimization toolbox for matlab: User’s Guide and Reference Manual. Version 4.1. (2019)

\bibitem{ashraphijuo2017}
Ashraphijuo, M., Aggarwal, V., Wang, X.:
A characterization of sampling patterns for low-Tucker-rank tensor completion problem.
In: 2017 IEEE International Symposium on Information Theory, Aachen, pp. 531--535 (2017)

\bibitem{Bai2016}
Bai, M., Zhang, X., Ni, G., Cui, C.:
An adaptive correction approach for tensor completion.
SIAM J. Imaging Sci. 9, 1298--1323 (2016)

\bibitem{BarMoi22}
Barak, B., Moitra, A.:
Noisy tensor completion via the sum-of-squares hierarchy.
Math. Program. 193, 513--548 (2022)

\bibitem{Breiding}
Breiding, P., Vannieuwenhoven, N.A.:
Riemannian trust region method for the canonical tensor rank approximation problem.
SIAM J. Optim. 28, 2435--2465 (2018)

\bibitem{cai2010}
Cai, J.-F., Candès, E.J., Shen, Z.:
A singular value thresholding algorithm for matrix completion.
SIAM J. Optim. 20, 1956--1982 (2010)

\bibitem{carroll1970}
Carroll, J.D., Chang, J.-J.:
Analysis of individual differences in multidimensional scaling via an n-way generalization of ``Eckart-Young'' decomposition.
Psychometrika 35, 283--319 (1970)

\bibitem{candes2012}
Candès, E., Recht, B.:
Exact matrix completion via convex optimization.
Commun. ACM 55, 111--119 (2012)

\bibitem{Choi75}
Choi, M.:
Positive semidefinite biquadratic forms.
Linear Algebra Appl. 12, 95--100 (1975)

\bibitem{Cifuentes2025}
Cifuentes, D., Li, Z.:
Solving exact and noisy rank-one tensor completion with semidefinite programming.
arXiv:2511.06062v1 (2025)

\bibitem{C-D-17}
Cosse, A., Demanet, L.:
Stable rank-one matrix completion is solved by the level 2 Lasserre relaxation.
Found. Comput. Math. 21, 891--940 (2021)

\bibitem{LMV2000}
De Lathauwer, L., De Moor, B., Vandewalle, J.:
A multilinear singular value decomposition.
SIAM J. Matrix Anal. Appl. 21, 1253--1278 (2000)

\bibitem{Dong-Gao-21}
Dong, S., Gao, B., Guan, Y., Glineur, F.:
New Riemannian preconditioned algorithms for tensor completion via polyadic decomposition.
SIAM J. Matrix Anal. Appl. 43, 840--866 (2022)

\bibitem{frolov2017}
Frolov, E., Oseledets, I.:
Tensor methods and recommender systems.
WIREs Data Min. Knowl. Discov. 7, e1201 (2017)

\bibitem{gao2024riemannian}
Gao, B., Peng, R., Yuan, Y.:
Riemannian preconditioned algorithms for tensor completion via tensor ring decomposition.
Comput. Optim. Appl. 88, 443--468 (2024)

\bibitem{Harshman}
Harshman, R.A.:
Foundations of the PARAFAC procedure: Models and conditions for an ``explanatory'' multi-modal factor analysis.
UCLA Work. Pap. Phon. 16, 1--84 (1970)

\bibitem{HKL20}
Henrion, D., Korda, M., Lasserre, J.:
The Moment-SOS Hierarchy: Lectures In Probability, Statistics, Computational Geometry, Control And Nonlinear PDEs.
World Scientific, Cambridge (2020)

\bibitem{GloPol3}
Henrion, D., Lasserre, J.B., Löfberg, J.:
Gloptipoly 3: moments, optimization and semidefinite programming.
Optim. Methods Softw. 24, 761--779 (2009)

\bibitem{hillar2013}
Hillar, C., Lim, L.-H.:
Most tensor problems are NP-hard.
J. ACM 60, 1--39 (2013)

\bibitem{hitchcock1927}
Hitchcock, F.L.:
The expression of a tensor or a polyadic as a sum of products.
J. Math. Phys. 6, 164--189 (1927)

\bibitem{hitchcock1928}
Hitchcock, F.L.:
Multiple invariants and generalized rank of a $p$-way matrix or tensor.
J. Math. Phys. 7, 39--79 (1928)

\bibitem{JiangNg19}
Jiang, Q., Ng, M.:
Robust low-tubal-rank tensor completion via convex optimization.
In: Proc. IJCAI, pp. 2649--2655 (2019)

\bibitem{KKKR17}
Kahle, T., Kubjas, K., Kummer, M., Rosen, Z.:
The geometry of rank-one tensor completion.
SIAM J. Appl. Algebra Geom. 1, 200--221 (2017)

\bibitem{Kara-10}
Karatzoglou, A., Amatriain, X., Baltrunas, L., Oliver, N.:
Multiverse recommendation: n-dimensional tensor factorization for context-aware collaborative filtering.
In: Proc. 4th ACM Conf. Recommender Systems, New York, pp. 79--86 (2010)

\bibitem{kilmer2011}
Kilmer, M.E., Martin, C.D.:
Factorization strategies for third-order tensors.
Linear Algebra Appl. 435, 641--658 (2011)

\bibitem{KolBad09}
Kolda, T.G., Bader, B.W.:
Tensor decompositions and applications.
SIAM Rev. 51, 455--500 (2009)

\bibitem{Kressner2014}
Kressner, D., Steinlechner, M., Vandereycken, B.:
Low-rank tensor completion by Riemannian optimization.
BIT Numer. Math. 54, 447--468 (2014)

\bibitem{LimCom10}
Lim, L.-H., Comon, P.:
Multiarray signal processing: tensor decomposition meets compressed sensing.
C. R. Méc. 338, 311--320 (2010)

\bibitem{L-H-14}
Lim, L.-H., Comon, P.:
Blind multilinear identification.
IEEE Trans. Inf. Theory 60, 1260--1280 (2014)

\bibitem{Lim13}
Lim, L.-H.:
Tensors and hypermatrices.
In: Hogben, L. (ed.) Handbook of Linear Algebra, 2nd ed. CRC Press, Boca Raton, FL (2013)

\bibitem{Ling2010}
Ling, C., Nie, J., Qi, L., Ye, Y.:
Biquadratic optimization over unit spheres and semidefinite programming relaxations.
SIAM J. Optim. 20.3, 1286--1310 (2010)

\bibitem{LAAW20}
Liu, X.-Y., Aeron, S., Aggarwal, V., Wang, X.:
Low-tubal-rank tensor completion using alternating minimization.
IEEE Trans. Inf. Theory 66, 1714--1737 (2020)

\bibitem{NieR}
Nie, J.:
Low rank symmetric tensor approximations.
SIAM J. Matrix Anal. Appl. 38, 1517--1540 (2017)

\bibitem{Nie-nuclear17}
Nie, J.:
Symmetric tensor nuclear norms.
SIAM J. Appl. Algebra Geom. 1, 599--625 (2017)

\bibitem{NiePoly}
Nie, J.:
Moment and polynomial optimization.
SIAM, Philadelphia (2023)

\bibitem{NieZhang16}
Nie, J., Zhang, X.:
Positive maps and separable matrices.
SIAM J. Optim. 26, 1236--1256 (2016)

\bibitem{qiu2021}
Qiu, D., Bai, M., Ng, M.K., Zhang, X.:
Robust low transformed multi-rank tensor methods for image alignment.
J. Sci. Comput. 87, 24 (2021)

\bibitem{rauhut2017}
Rauhut, H., Schneider, R., Stojanac, Ž.:
Low rank tensor recovery via iterative hard thresholding.
Linear Algebra Appl. 523, 220--262 (2017)

\bibitem{rudin1976}
Rudin, W.:
Principles of mathematical analysis, 3rd ed.
McGraw-Hill, New York (1976)

\bibitem{Stein-16}
Steinlechner, M.:
Riemannian optimization for high-dimensional tensor completion.
SIAM J. Sci. Comput. 38, S461--S484 (2016)

\bibitem{sturm1999using}
Sturm, J.:
Using sedumi 1.02, a matlab toolbox for optimization over symmetric cones.
Optim. Methods Softw. 11, 625--653 (1999)

\bibitem{SunToh}
Sun, D., Toh, K., Yuan, Y., Zhao, X.:
SDPNAL+: A Matlab software for semidefinite programming with bound constraints (version 1.0).
Optim. Methods Softw. 35, 87--115 (2020)

\bibitem{Swijsen22}
Swijsen, L., van der Veken, J., Vannieuwenhoven, N.:
Tensor completion using geodesics on Segre manifolds.
Numer. Linear Algebra Appl. 29, e2446 (2022)

\bibitem{tang2015}
Tang, G., Shah, P.:
Guaranteed tensor decomposition: A moment approach.
In: Proc. International Conference on Machine Learning, Lille, vol. 37, pp. 1491--1500 (2015)

\bibitem{Todd01}
Todd, M.:
Semidefinite Optimization.
Acta Numerica. 10, 515--560, (2001)


\bibitem{wang-jin-17}
Wang, A., Jin, Z.:
Near-optimal noisy low-tubal-rank tensor completion via singular tube thresholding.
In: Proc. IEEE International Conference on Data Mining Workshops (ICDMW), pp. 553--560 (2017)

\bibitem{Wang-19}
Wang, A., Lai, Z., Jin, Z.:
Noisy low-tubal-rank tensor completion.
Neurocomputing 330, 267--279 (2019)

\bibitem{YuanZhang2016}
Yuan, M., Zhang, C.:
On tensor completion via nuclear norm minimization.
Found. Comput. Math. 16, 1031--1068 (2016)

\bibitem{ZhangAeron17}
Zhang, Z., Aeron, S.:
Exact tensor completion using t-SVD.
IEEE Trans. Signal Process. 65, 1511--1526 (2017)

\bibitem{zhao2020}
Zhao, X., Bai, M., Ng, M.K.:
Nonconvex optimization for robust tensor completion from grossly sparse observations.
J. Sci. Comput. 85, 46 (2020)

\bibitem{Z-S-K}
Zhao, X., Sun, D., Toh, K.:
A Newton-CG Augmented Lagrangian Method for Semidefinite Programming.
SIAM J. Optim. 20, 1737--1765 (2010)

\bibitem{Zhao-Zhang}
Zhao, Q., Zhang, L., Cichocki, A.:
Bayesian CP factorization of incomplete tensors with automatic rank determination.
IEEE Trans. Pattern Anal. Mach. Intell. 37, 1751--1763 (2015)

\bibitem{zhou}
Zhou, J., Nie, J., Peng, Z., Zhou, G.:
The rank-1 completion problem for cubic tensors.
SIAM J. Matrix Anal. Appl. 46, 151--171 (2025)

\end{thebibliography}
\end{document}